\numberwithin{equation}{section}
\definecolor{dullmagenta}{rgb}{0.4,0,0.4}   % #660066
\definecolor{darkblue}{rgb}{0,0,0.4}
\def\dblue#1{\textcolor[rgb]{0,0,0.7}{#1}}
\newcommand{\C}{{\mathbb C}}
\newcommand{\A}{\mathbb{A}}
\newcommand{\R}{{\mathbb R}}
\renewcommand{\H}{\mathbb{H}}
\newcommand{\D}{{\mathbb D}}
\newcommand{\Pol}{\operatorname{Pol}}
\newcommand{\rH}{\rho_{\,\H}}
\newcommand{\rHr}{\rho_{\,\H,r}}
\newcommand{\rC}{\rho_{\,\C}}
\newcommand{\rCg}{\rho_{\,\C,\gamma}}
\newtheorem{thm}{Theorem}[section]
\newtheorem{prop}[thm]{Proposition}
\newtheorem{lemma}[thm]{Lemma}
\newtheorem{cor}[thm]{Corollary}
\newtheorem*{thm-others}{Theorem}
\theoremstyle{remark}
\newtheorem{rem}[thm]{Remark}
\theoremstyle{remark}
\renewcommand{\descriptionlabel}[1]%
         {\dblue{#1:}\\}
\title{Discrepancy densities for planar and hyperbolic Zero Packing}
\author{Aron Wennman}
\address{Department of Mathematics, KTH Royal Institute of Technology,\newline Stockholm, 100 44, Sweden}
\email{aronw@math.kth.se}
\date{\today}
\thanks{The author was supported by the Swedish Research Council, Grant no. 2012-3122, and by the Royal Swedish Academy of Sciences}
\subjclass[2010]{30C62 (Primary), 30C70, 30H20 (Secondary)}
\keywords{Geometric Zero Packing, $\bar\partial$-estimates, Asymptotic Variance}
\begin{document}

\begin{abstract}
We study the problem of {\em geometric zero packing}, 
recently introduced by Hedenmalm \cite{HedenmalmZeroPacking}.
There are two natural densities associated to this problem:
the {\em discrepancy density} $\rH$, given by
$$
\rH=\liminf_{r\to 1^-}\inf_{f}\frac{\int_{\D(0,r)}\left((1-\lvert z\rvert^2)\lvert f(z)\rvert-1\right)^2\frac{d A(z)}{1-\lvert z\rvert^2}}{\int_{\D(0,r)}\frac{d A(z)}{1-\lvert z\rvert^2}}
$$
which measures the discrepancy in optimal approximation of $(1-\lvert z\rvert^2)^{-1}$ with the modulus of polynomials $f$, 
and it's relative, the {\em tight discrepancy density} $\rH^*$, 
which will trivially satisfy $\rH\leq\rH^*$.
These densities have deep connections to the boundary behaviour of conformal mappings with $k$-quasiconformal extensions, 
which can be seen from the Hedenmalm's result that the universal asymptotic variance $\Sigma^2$ is related to $\rH^*$ by
$\Sigma^2=1-\rH^*$.
Here we prove that in fact $\rH=\rH^*$, resolving a conjecture by Hedenmalm in the positive.
The natural planar analogues $\rC$ and $\rC^*$ to these densities make contact with work of Abrikosov on Bose-Einstein condensates. 
As a second result we prove that also $\rC=\rC^*$.
The methods are based on Ameur, Hedenmalm and Makarov's H{\"o}rmander-type $\bar\partial$-estimates with polynomial growth control. 
As a consequence we obtain sufficiency results on the degrees of approximately optimal polynomials.
\end{abstract}

\maketitle

\section{Introduction}

\subsection{Hyperbolic discrepancy densities}
Let $0<r<1$ and let $f$ be a holomorphic function defined on the unit disk $\D$. 
We shall be concerned with the {\em hyperbolic discrepancy function} $\Phi_f(z,r)$, defined by
$$
\Phi_f(z,r)=\left((1-\lvert z\rvert^2)\lvert f(z)\rvert -1_{\D(0,r)}(z)\right)^2,\qquad z\in\D.
$$
The intuition is that $\Phi_f$ measures the discrepancy between $f$ and 
the hyperbolic metric $\vartheta(z)=(1-\lvert z\rvert^2)^{-1}$.
Since $f$ is holomorphic, $\Delta\log \lvert f(z)\rvert$ is, considered as a distribution, 
a sum of point masses, while $\Delta\log\vartheta(z)$ is a smooth positive density.
This constitutes a clear obstruction to obtain a perfect approximation with holomorphic $f$. 
The term {\em zero packing}, introduced by Hedenmalm \cite{HedenmalmZeroPacking}, 
comes from the realization that this problem can be phrased in terms of optimally discretizing 
the smooth positive mass $\Delta\log\vartheta$ as a sum of point masses -- 
corresponding to the zeros of the holomorphic function $f$.

Our main interest lies in the {\em hyperbolic discrepancy density} $\rH$, and in a related object called 
the {\em tight hyperbolic discrepancy density} $\rH^*$. Without further delay we proceed to define these.
For polynomials $f$ we consider the functionals
$$
\rHr(f)=\frac{\int_{\D(0,r)}\Phi_f(z,r)\frac{dA(z)}{1-\lvert z\rvert^2}}{\int_{\D(0,r)}\frac{dA(z)}{1-\lvert z\rvert^2}}=\frac{\int_{\D(0,r)}\Phi_f(z,r)\frac{dA(z)}{1-\lvert z\rvert^2}}{\log\frac{1}{1-r^2}}
$$
and
$$
\rHr^*(f)=\frac{\int_{\D}\Phi_f(z,r)\frac{dA(z)}{1-\lvert z\rvert^2}}{\int_{\D(0,r)}\frac{dA(z)}{1-\lvert z\rvert^2}}=\frac{\int_{\D}\Phi_f(z,r)\frac{dA(z)}{1-\lvert z\rvert^2}}{\log\frac{1}{1-r^2}}.
$$
In terms of these, the two densities are obtained as
\begin{equation}\label{eqn:rhoH}
\rH=\liminf_{r\to 1^-}\inf_{f}\rHr(f),
\end{equation}
and
\begin{equation}\label{eqn:rhoH-star}
\rH^*=\liminf_{r\to 1^-}\inf_f\rHr^*(f),
\end{equation}
where in both cases the infimum is taken over the set of all polynomials $\Pol(\C)$.

The exact values of these are unknown. The only available quantitative result is due to Hedenmalm, 
which in particular shows that the indicated obstacle to perfect approximation is real, in the sense that $\rH>0$.
\begin{thm}[Hedenmalm, \cite{HedenmalmZeroPacking}]\label{thm:rho-positive}
The hyperbolic discrepancy densities enjoy the estimate
$$
2\times 10^{-8}\leq \rH\leq\rH^*\leq 0.12087.
$$
\end{thm}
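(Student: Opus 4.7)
The middle inequality $\rH\le\rH^*$ is immediate: since $\Phi_f(z,r)\ge 0$ and $\D(0,r)\subset\D$, one has $\rHr(f)\le\rHr^*(f)$ for every polynomial $f$ and every $r\in(0,1)$, and this pointwise inequality survives passage to the infimum over $f$ and the liminf in $r$.

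For the upper bound $\rH^*\le 0.12087$ my plan is to exhibit concrete polynomial test families and optimize. The natural first candidates are normalized monomials $f_n(z)=c_n z^n$, whose moduli concentrate near a single circle; with $c_n$ and $r=r_n\to 1^-$ suitably matched, $\rHr^*(f_n)$ reduces to a one-dimensional integral in $s=|z|^2$ that can be evaluated and optimized. If monomials do not reach $0.12087$, I would try lacunary polynomials $f_n(z)=\sum_k a_k z^{kn}$, or products $f_n(z)=\prod_j(1-\bar\zeta_j z)^{m_j}$ whose zeros are placed to discretize the hyperbolic charge $\Delta\log\vartheta=4(1-|z|^2)^{-2}$ as evenly as possible in a thin annulus near $\partial\D$. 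The explicit constant $0.12087$ should emerge from the resulting finite-dimensional variational problem for the zero density.

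The delicate lower bound $2\times 10^{-8}\le\rH$ is the main obstacle. Writing $u=\log|f|$ and $\varphi=-\log(1-|z|^2)$, we have $(1-|z|^2)|f(z)|-1=e^{u-\varphi}-1$, and the elementary inequality $(e^t-1)^2\ge c_0\min(t^2,1)$ reduces the task to an estimate of the shape
$$
\int_{\D(0,r)}\min\bigl((u-\varphi)^2,1\bigr)\frac{\dd A(z)}{1-|z|^2}\ge c_1\log\frac{1}{1-r^2}.
$$
The distributional Laplacian $\Delta(u-\varphi)=2\pi\sum_{z_j\in\D}\delta_{z_j}-4(1-|z|^2)^{-2}\dd A$ reconciles an atomic measure with a smooth one; a Green's identity combined with a $\bar\partial$-type $L^2$-estimate should force $L^2$-mass of the right order from this incompatibility. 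More geometrically, near every zero $z_j$ of $f$ inside $\D(0,r)$ the factor $(1-|z|^2)|f|$ vanishes, so $\Phi_f\ge \tfrac14$ on a fixed-radius hyperbolic disk around $z_j$; summing these local contributions against the total hyperbolic mass $\log\frac{1}{1-r^2}$ and noting that a sufficiently large zero count is needed for $(1-|z|^2)|f|$ to be approximately unimodular on $\D(0,r)$ yields an absolute positive lower bound. The genuinely hard step is not the qualitative statement $\rH>0$, but the quantitative bookkeeping that produces the explicit constant $2\times 10^{-8}$: one must simultaneously track a sharp constant in $(e^t-1)^2\ge c_0\min(t^2,1)$, the optimal hyperbolic radius around each zero on which $\Phi_f$ stays bounded below, and a sharp lower bound for the number of zeros of $f$ in $\D(0,r)$ compatible with approximate unimodularity.
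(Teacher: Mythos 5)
There is a genuine gap here, and it is worth noting first that the paper does not prove this statement at all: Theorem~\ref{thm:rho-positive} is quoted from Hedenmalm \cite{HedenmalmZeroPacking}, so the only part of your argument that can be checked against something in the text is the middle inequality $\rH\leq\rH^*$, which you prove correctly (it is the same trivial observation the paper makes). For the upper bound, your plan of optimizing over monomials $c_nz^n$ cannot work: $(1-\lvert z\rvert^2)\lvert c_n\rvert\lvert z\rvert^n$ is peaked near a single hyperbolic circle, while the normalizing mass $\log\frac{1}{1-r^2}$ grows without bound, so $\rHr(c_nz^n)$ stays far from $0.12087$; good test objects would need zeros spread throughout $\D(0,r)$ roughly uniformly in the hyperbolic metric, and even then no explicit polynomial construction attaining $0.12087$ is known. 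The actual source of that constant is visible in the paper's own introduction: Hedenmalm's identity $\Sigma^2=1-\rH^*$ in \eqref{eqn:Hedenmalmrho-star} combined with the Astala--Ivrii--Per{\"a}l{\"a}--Prause lower bound $\Sigma^2\geq 0.87913$ from \cite{AIPP}, giving $\rH^*\leq 1-0.87913=0.12087$. So the upper bound is not a finite-dimensional variational computation with polynomials; it passes through the quasiconformal machinery.

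For the lower bound, the reduction via $(e^t-1)^2\geq c_0\min(t^2,1)$ is fine, but the displayed estimate you then ask for is essentially the theorem itself, and the geometric step you lean on is false as stated: it is not true that $\Phi_f\geq\tfrac14$ on a hyperbolic disk of \emph{fixed} radius around each zero $z_j$. Writing $u=\log\bigl((1-\lvert z\rvert^2)\lvert f(z)\rvert\bigr)$, near a zero one has $u(z)=\log\lvert z-z_j\rvert+h(z)$ with $h$ having bounded hyperbolic Laplacian but no a priori bound on $h(z_j)$; if $h(z_j)$ is very large, $(1-\lvert z\rvert^2)\lvert f\rvert$ climbs from $0$ back to $\approx 1$ within an arbitrarily small hyperbolic neighbourhood of $z_j$, so the per-zero deficit region can have negligible hyperbolic area and your summation argument collapses. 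Forcing a definite amount of discrepancy mass out of the incompatibility between the atomic measure $\Delta\log\lvert f\rvert$ and the smooth measure $\Delta\log\frac{1}{1-\lvert z\rvert^2}$ is exactly the hard quantitative content of Hedenmalm's proof, and your ``Green's identity plus $\bar\partial$-estimate should force it'' is a placeholder, not an argument. As written, the proposal establishes only $\rH\leq\rH^*$.
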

For an illustration of the importance of this theorem, in particular of the property that $\rH^*>0$, see Subsection~\ref{ss:q-conf}. 

That the densities satisfy the inequality $\rH\leq\rH^*$ is immediate. 
The density $\rH^*$ differs from $\rH$ in that it adds an $L^2$-punishment near the boundary:
$$
\rHr^*(f)=\rHr(f)+\frac{1}{\log\frac{1}{1-r^2}}\int_{\D\setminus \D(0,r)}\lvert f(z)\rvert^2(1-\lvert z\rvert^2)dA(z),
$$
and in light of this, it is clear that $\rH\leq\rH^*$. 
Hedenmalm has conjectured that equality holds, which is what our main theorem concerns.
\begin{thm}\label{thm:main-hyperbolic}
It holds that $\rH=\rH^*$.
\end{thm}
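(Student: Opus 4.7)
The inequality $\rH \leq \rH^*$ is automatic from the definitions, so the task is to establish $\rH^* \leq \rH$. My plan is to take a near-optimal polynomial $f$ for $\rHr(f)$ and, by multiplying by a smooth cutoff supported inside $\D$ and correcting via a $\bar\partial$-solution, produce a polynomial $\tilde f$ whose value $\rho_{\,\H,r'}^*(\tilde f)$ at a slightly smaller radius $r' < r$ matches $\rHr(f)$ up to vanishing error. The function $\tilde f$ should agree with $f$ on $\D(0, r')$ (so that the discrepancy integral is preserved) while having small $L^2((1-|z|^2)\,dA)$ norm outside (so that the extra penalty in $\rho_{\,\H,r'}^*$ vanishes).

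Concretely, I would pick $r' < r$ with $r - r' = (1-r)\log\bigl(1/(1-r)\bigr)$, so that the hyperbolic measure of the transition annulus $\D(0,r) \setminus \D(0,r')$ is of order $\log\log\bigl(1/(1-r)\bigr) = o(L)$ with $L := \log\bigl(1/(1-r^2)\bigr)$; I would then take a smooth radial cutoff $\chi$ with $\chi \equiv 1$ on $\D(0, r')$ and $\supp\chi \subset \D(0, r)$. Applying a H\"ormander-type $\bar\partial$-estimate in the spirit of Ameur, Hedenmalm and Makarov, with weight $\phi(z) = -\log(1-|z|^2)$ whose Laplacian $(1-|z|^2)^{-2}$ concentrates the curvature at the boundary, produces $u$ solving $\bar\partial u = (\bar\partial\chi) f$ on $\D$ with
$$
\int_\D |u|^2 (1-|z|^2)\,dA \lesssim \int_{r' < |z| < r} |\bar\partial\chi|^2 |f|^2 (1-|z|^2)^3\,dA.
$$
Setting $\tilde f := \chi f - u$ gives a holomorphic function in the weighted Bergman space $A^2\bigl((1-|z|^2)\,dA\bigr)$, and approximation by polynomials (using the polynomial growth control afforded by the weight) completes the construction. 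I then split $\rho_{\,\H,r'}^*(\tilde f)$ into contributions from $\D(0, r')$, where the triangle inequality together with Cauchy--Schwarz yields $\rHr(f)(1+o(1))$; from the transition annulus; and from $\D \setminus \D(0, r)$, which is governed by $\int_\D |u|^2(1-|z|^2)\,dA$.

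The main obstacle lies in the annulus contribution $\int_{r' < |z| < r}(1-|z|^2)|\tilde f|^2\,dA$: the crude bound $|\tilde f|^2 \leq 2|f|^2 + 2|u|^2$ combined with the norm bound $\|f\|^2_{A^2((1-|z|^2)dA)} = O(L)$ delivers only $O(L)$, one order too large. The resolution rests on the elementary inequality $\bigl((1-|z|^2)|f|\bigr)^2 \leq 2\bigl((1-|z|^2)|f|-1\bigr)^2 + 2$, which bounds the annulus integral of $(1-|z|^2)|f|^2$ by twice the annulus piece of the discrepancy integral of $f$ plus twice the annulus hyperbolic measure. Combining the near-optimality upper bound $D_r(f) \leq (\rH + o(1))L_r$ with the near-optimality lower bound $D_{r'}(f) \geq (\rH - o(1))L_{r'}$ furnished by the definition of $\rH$ gives $D_r(f) - D_{r'}(f) \leq \rH\,(L_r - L_{r'}) + o(L) = o(L)$ for the chosen $r'$. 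Balancing this against the $\bar\partial$-estimate, which requires $(1-r)^2/(r-r')^2 = o(1)$ to force $\int (1-|z|^2)|u|^2\,dA = o(L)$, is the technical crux; with the same weight one simultaneously reads off the advertised sufficiency result on the degrees of approximately optimal polynomials.
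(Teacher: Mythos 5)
Your proposal is correct, and its skeleton --- cut a near-minimizer off inside a slightly smaller disc, restore holomorphy with a H{\"o}rmander-type $\bar\partial$-correction in the spirit of Ameur--Hedenmalm--Makarov whose weighted $L^2$-norm is controlled by the annulus mass of $f$, then compare the two functionals --- is the same as the paper's. Where you genuinely differ is in the crux, the $L^2$-non-concentration of near-minimizers on the transition annulus. The paper derives it (Theorem~\ref{thm:L2-non-conentration}) from a variational identity for exact minimizers (Lemma~\ref{lem:unif-norm}), an $L^1$-non-concentration estimate (Lemma~\ref{lem:L1-non-concentration}) and a dilation argument with the auxiliary functionals $\rho_{\,\H,r,\alpha}$ (Proposition~\ref{prop:parameter-freedom}). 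You instead use the pointwise bound $\bigl((1-|z|^2)|f|\bigr)^2\le 2\bigl((1-|z|^2)|f|-1\bigr)^2+2$ together with a two-radius comparison: writing $D_s(f)$ for the discrepancy integral over $\D(0,s)$ and $L_s=\log\frac{1}{1-s^2}$, near-optimality along the liminf-attaining sequence gives $D_r(f)\le(\rH+o(1))L_r$, while the definition of $\rH$ as a liminf gives $D_{r'}(f)\ge(\rH-o(1))L_{r'}$ for \emph{every} $r'$ close to $1$; since your annulus has hyperbolic mass $L_r-L_{r'}=O\bigl(\log\log\frac{1}{1-r}\bigr)=o(L_r)$, the annulus $L^2$-mass of $f$ is $o(L_r)$. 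This dispenses with exact minimizers, the $L^1$-lemma and the dilation altogether --- a more elementary route to the key estimate, at the mild cost of a wider annulus. Two points to tighten: the supremum entering the $\bar\partial$-bound is $|\bar\partial\chi|^2(1-|z|^2)^2\asymp\bigl((1-r')/(r-r')\bigr)^2$, which for your choice of width is $O(1)$ rather than the $o(1)$ you quote --- but $O(1)$ times the $o(L_r)$ annulus mass is all that is needed; and to land in the admissible class of polynomials you should either take, as the paper does, the $L^2_{n,\phi}$-minimal solution on all of $\C$ with growth $O(|z|^{n-1})$, $n=\lceil r^2/(1-r^2)\rceil$, so that $\chi f-u$ is a polynomial by Liouville's theorem (this is also what yields Corollary~\ref{cor:degree-hyperbolic}), or else invoke density of polynomials in the weighted Bergman space on $\D$; your phrase about ``approximation by polynomials using the polynomial growth control'' conflates these two mechanisms, though either one closes the step. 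Evaluating $\rho^*$ at the smaller radius $r'$ rather than at $r$ is harmless, since both densities are lower limits in the radius.
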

In the process we obtain the following corollary, which gives a sufficiency result regarding the degree of approximately optimal polynomials. We let $\lceil x\rceil$ denote the smallest integer $n$ with $n\geq x$.
\begin{cor}\label{cor:degree-hyperbolic}
The densities $\rH$ and $\rH^*$ may as well be calculated as
$$
\rH=\liminf_{r\to 1^-}\inf_{f\in\Pol_{n(r)}}\rHr(f), \qquad \rH^*=\liminf_{r\to 1^-}\inf_{f\in \Pol_{n(r)}}\rHr^*(f),
$$
where $n(r)=\left\lceil\frac{r^2}{1-r^2}\right\rceil$.
\end{cor}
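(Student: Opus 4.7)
The plan is to read off Corollary~\ref{cor:degree-hyperbolic} from the constructive proof of Theorem~\ref{thm:main-hyperbolic}. One direction is immediate: since $\Pol_{n(r)}\subset\Pol(\C)$, we trivially have
$$
\liminf_{r\to 1^-}\inf_{f\in\Pol_{n(r)}}\rHr(f)\geq \rH, \qquad \liminf_{r\to 1^-}\inf_{f\in\Pol_{n(r)}}\rHr^*(f)\geq \rH^*.
$$
The substance is the reverse inequality: given $\epsilon>0$ and $r$ sufficiently close to $1$, one must exhibit a polynomial $g_r\in\Pol_{n(r)}$ with $\rHr^*(g_r)\leq \rH^*+\epsilon$ (the statement for $\rH$ will then follow since $\rHr\leq\rHr^*$ pointwise and $\rH=\rH^*$ by Theorem~\ref{thm:main-hyperbolic}).

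The strategy is to start from an arbitrary polynomial $f$ (possibly of huge degree) with $\rHr^*(f)\leq\rH^*+\epsilon/2$ and truncate it to degree $n(r)$ using the Ameur--Hedenmalm--Makarov H\"ormander-type $\bar\partial$-estimates with polynomial growth control, as advertised in the abstract. Concretely, I would write $g=\chi f-u$, where $\chi$ is a smooth radial cutoff equal to $1$ on $\D(0,r)$ and supported slightly outside, and $u$ solves $\bar\partial u = f\,\bar\partial\chi$ as the minimal $L^2$-solution against a weight $e^{-\phi}$ tailored so that the resulting $g$, viewed as an entire function via extension by zero followed by correction, automatically has polynomial growth of order $\leq n(r)$. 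The natural choice is $\phi(z)=(n(r)+1)\log(1+|z|^2)$ plus a confining piece reproducing the hyperbolic measure $\dd A/(1-|z|^2)$ inside $\D(0,r)$. The H\"ormander estimate then bounds $\|u\|$ in the weighted $L^2$-norm by $\|f\bar\partial\chi\|$, which is small because $\bar\partial\chi$ is supported in a thin annulus near $|z|=r$.

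The main obstacle I anticipate is the sharp balance between the degree threshold and the boundary penalty: the truncation must preserve not only the interior discrepancy $\int_{\D(0,r)}\Phi_f(z,r)(1-|z|^2)^{-1}\dd A$ but also the near-boundary term $\int_{\D\setminus\D(0,r)}|f|^2(1-|z|^2)\dd A$, modulo an error that is $o\!\left(\log\tfrac{1}{1-r^2}\right)$ after normalization. Achieving this with the precise threshold $n(r)=\lceil r^2/(1-r^2)\rceil$ (as opposed to any constant multiple of it) demands a choice of $\phi$ and $\chi$ attuned to the hyperbolic length scale $(1-r^2)^{1/2}$, and this is where most of the technical effort should concentrate. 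Once the construction is in place, applying it to a near-optimal $f$ furnishes $g_r\in\Pol_{n(r)}$ witnessing the desired inequality, and the corollary follows in both its forms by invoking Theorem~\ref{thm:main-hyperbolic} to transfer the estimate from $\rHr^*$ to $\rHr$.
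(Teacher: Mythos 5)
Your overall plan (trivial inequality one way; cut off and $\bar\partial$-correct to get a competitor in $\Pol_{n(r)}$ the other way) is the right kind of argument, but the step you wave at --- ``$\lVert f\bar\partial\chi\rVert$ is small because $\bar\partial\chi$ is supported in a thin annulus'' --- is precisely the point that does not follow and that the paper spends most of Section~4 establishing. With the natural scaling $\delta\asymp 1-r$ one has $\lvert\bar\partial\chi\rvert^2(1-\lvert z\rvert^2)^2=O(1)$, so the H\"ormander bound reduces the error to $\frac{1}{\log\frac{1}{1-r^2}}\int_{\A((1-\delta)r,r)}\lvert f\rvert^2(1-\lvert z\rvert^2)\,dA(z)$; boundedness of the normalized Bergman norm of a (near-)optimal $f$ only makes this $O(1)$, not $o(1)$, since nothing a priori prevents $f$ from concentrating a fixed positive fraction of its $L^2$-mass in that annulus (its hyperbolic area is $\log(1+r^2)$, not small in absolute terms). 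The paper closes exactly this gap with the $L^2$-non-concentration estimate (Theorem~\ref{thm:L2-non-conentration}), proved via the variational identity of Lemma~\ref{lem:unif-norm}, the $L^1$-estimate of Lemma~\ref{lem:L1-non-concentration} and the dilation argument of Proposition~\ref{prop:parameter-freedom} --- and that proof works for genuine minimizers of $\rHr$, along subsequences $\mathcal{R}$ realizing the liminf. Your proposal applies the truncation to an arbitrary polynomial that is $\epsilon/2$-optimal for $\rHr^*$ and never supplies the corresponding non-concentration statement; as written, the proof has a hole at its central estimate. A smaller slip: since $\rH^*$ is a liminf in $r$, a polynomial with $\rHr^*(f)\leq\rH^*+\epsilon/2$ need not exist for every $r$ near $1$, only along a subsequence (which is all the corollary needs, but your ``for all $r$ sufficiently close to $1$'' formulation is not available).

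You also miss that no new construction is required: the paper's proof of the corollary is a two-line remark. The polynomials $\nu_r=\chi f_r-u_r$ already built in the proof of Theorem~\ref{thm:main-hyperbolic} lie in $\Pol_{n(r)}(\C)$ with $n(r)=\lceil r^2/(1-r^2)\rceil$ --- the degree bound comes from Theorem~\ref{thm:d-bar}, where the weight is $\phi=\log\frac{1}{1-\lvert z\rvert^2}$ and its minimal subharmonic $\mathscr{C}^{1,1}$ extension $\widehat\phi$ beyond $\lvert z\rvert=r$ grows like $\frac{r^2}{1-r^2}\log\lvert z\rvert^2$, so Liouville gives $\nu_r\in\Pol_{n(r)}$; no ad hoc weight of the form $(n(r)+1)\log(1+\lvert z\rvert^2)$ is needed, and it is this specific $\widehat\phi$ that produces the exact threshold $n(r)$. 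Along $\mathcal{R}$ these satisfy $\rHr^*(\nu_r)=\rHr(f_r)+o(1)\to\rH=\rH^*$, which yields the nontrivial inequality for $\rH^*$ at once, and the statement for $\rH$ follows since $\rHr(\nu_r)\leq\rHr^*(\nu_r)$ (in the paper, $\rHr(\nu_r)=\rHr^*(\nu_r)+o(1)$). If you insist on truncating arbitrary near-minimizers instead, you must in effect reprove Theorem~\ref{thm:L2-non-conentration} for such functions; the intended, and much shorter, route is to reuse the minimizers and their non-concentration.
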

Note that $r^2/(1-r^2)$ is the hyperbolic area of the disc $\D(0,r)$.
Ideally, one would want to show that the $n(r)$ zeros of approximating polynomials are 
uniformly spread out with respect to the hyperbolic metric. This, however, remains out of reach at present.

The proof of Theorem~\ref{thm:main-hyperbolic} follows the route suggested by Hedenmalm in \cite{HedenmalmZeroPacking}. We employ the machinery of H{\"o}rmander-type $\bar\partial$-estimates 
with polynomial growth control developed by Ameur, Hedenmalm and Makarov in \cite{AHMBerezin}, 
and an array of variational arguments. The difficulty is to control the size of minimizers of $\rHr(f)$ near the boundary. 
The key ingredient in the solution to this problem is the $L^2$-non-concentration estimate of Theorem~\ref{thm:L2-non-conentration}, which asserts that for minimizers $f$, we have an estimate
$$
\int_{\A((1-\delta)r,r)}\lvert f(z)\rvert^2(1-\lvert z\rvert^2) \,dA(z)=o(1),
$$ 
along certain sequences of radii $r\to 1^-$.

\subsection{Quasiconformal mappings: The integral means spectrum and Quasicircles}\label{ss:q-conf}
The number $\rH^*$ has turned out to play a significant role in the theory of quasiconformal mappings, 
due to it's relation to the universal asymptotic variance $\Sigma^2$.
The number $\Sigma^2$ was introduced in \cite{AIPP}, and is defined in terms of McMullen's asymptotic variance \cite{McMullenVariance}
$$
\sigma^2(g)=\limsup_{r\to 1^-}\frac{\int_{\partial\D}\lvert g(r\zeta)\rvert^2d \sigma(\zeta)}{\log\frac{1}{1-r^2}},
$$
by
$$
\Sigma^2=\sup\{\sigma^2(g): g={\bf P}\mu,\, \lVert \mu\rVert_{L^{\infty}(\D)}= 1\}
$$
where ${\bf P}$ denotes the Bergman projection
$$
{\bf P}f(z)=\int_{\D}\frac{f(w)}{(1-z\bar w)^2}dA(w),\qquad f\in L^1(\D).
$$
For details we refer to e.g. \cite{AIPP, NoQuasiCircles, HedenmalmZeroPacking}. 
Here we mention a couple of recent developments: A well-known conjecture by Prause and Smirnov \cite{PrauseSmirnov} (see also \cite{Jones}) 
for the quasiconformal integral means spectrum $B(k,t)$ stated that
$$
B(k,t)=\begin{cases} \frac{1}{4}k^2\lvert t\rvert^2, & \lvert t\rvert\leq \frac{2}{k}\\ k\lvert t\rvert-1, &\lvert t\rvert>\frac{2}{k}.\end{cases}
$$
Ivrii recently proved \cite{NoQuasiCircles} that $B(k,t)$ satisfies
$B(k,t)\sim\frac{1}{4}\Sigma^2k^2\lvert t\rvert^2$ in the sense that 
$$
\lim_{k\to 0^+}\lim_{t\to0}\frac{B(k,t)}{k^2\lvert t\rvert^2}=\frac{\Sigma^2}{4}
$$
In \cite{AIPP}, Astala, Ivrii, Per{\"a}l{\"a} and Prause obtained the bounds $0.879\leq \Sigma^2\leq 1$, 
and it was conjectured that $\Sigma^2=1$, which would be implied by the above conjecture.
However, in addition to Theorem~\ref{thm:rho-positive}, 
Hedenmalm \cite{HedenmalmZeroPacking} has recently proven that
\begin{equation}\label{eqn:Hedenmalmrho-star}
\Sigma^2=1-\rH^*,
\end{equation}
and taken together, these facts refute the conjecture.

The same family of objects is also relevant to work by Ivrii on the dimension of $k$-quasicircles: 
If $D(k)$ is the maximal Hausdorff dimension of a $k$-quasicircle, a theorem of Smirnov (see the book \cite{Astala} for an exposition) says that
\begin{equation}\label{eqn:Smirnov-bound}
D(k)\leq 1+k^2,\qquad 0\leq k< 1.
\end{equation}
Astala conjectured this result \cite{AstalaAreaDist}, and furthermore suggested that this bound is sharp. Ivrii \cite{NoQuasiCircles} proved that
\begin{equation}\label{eqn:Ivrii-quasicircle}
D(k)=1+\Sigma^2k^2+O(k^{8/3-\epsilon}),
\end{equation}
which together with Theorem~\ref{thm:rho-positive} and \eqref{eqn:Hedenmalmrho-star} effectively disproves the latter part of the conjecture.

\subsection{The planar discrepancy densities}
We are also interested in planar analogues of the densities $\rH$ and $\rH^*$. 
For $R>0$ and an entire function $f(z)$, we consider the {\em planar discrepancy function}
$$
\Psi_f(z,R)=\left(\lvert f(z)\rvert e^{-\lvert z\rvert^2}-1_{\D(0,R)}(z)\right)^2,
$$
and set 
\begin{equation}\label{eqn:rhoC}
\rC=\liminf_{R\to\infty}\inf_f\frac{\int_{\D(0,R)}\Psi_f(z,R)dA(z)}{\int_{\D(0,R)}dA(z)}=\liminf_{R\to\infty}\inf_f\frac{1}{R^2}\int_{\D(0,R)}\Psi_f(z,R)dA(z),
\end{equation}
and correspondingly
\begin{equation}\label{eqn:rhoC-star}
\rC^*=\liminf_{R\to\infty}\inf_f\frac{\int_{\C}\Psi_f(z,R)dA(z)}{\int_{\D(0,R)}dA(z)}=\liminf_{R\to\infty}\inf_f\frac{1}{R^2}\int_{\C}\Psi_f(z,R)dA(z),
\end{equation}
where the infimum is taken over all polynomials. The next result corresponds completely to Theorem~\ref{thm:main-hyperbolic}.
\begin{thm}\label{thm:main-planar}
It holds that $\rC=\rC^*$.
\end{thm}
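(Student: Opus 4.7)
The plan is to follow the strategy outlined for the hyperbolic Theorem~\ref{thm:main-hyperbolic}, exchanging the hyperbolic weight $(1-\lvert z\rvert^2)^{-1}$ for the Gaussian weight $e^{-2\lvert z\rvert^2}$ and the hyperbolic area $r^2/(1-r^2)$ for the Fock-type area $R^2$ of the disc $\D(0,R)$. The direction $\rC\leq\rC^*$ is immediate from
$$
\frac{1}{R^2}\int_\C\Psi_f(z,R)\,dA(z) = \frac{1}{R^2}\int_{\D(0,R)}\Psi_f(z,R)\,dA(z) + \frac{1}{R^2}\int_{\C\setminus\D(0,R)}\lvert f(z)\rvert^2 e^{-2\lvert z\rvert^2}\,dA(z),
$$
so the task is to establish the reverse. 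Choose a sequence $R_n\to\infty$ and polynomials $f_n$ realizing $\rC$ in the limit via \eqref{eqn:rhoC}. By the planar counterpart to Corollary~\ref{cor:degree-hyperbolic}, which we would establish in parallel, we may assume $\deg f_n \leq \lceil R_n^2\rceil$.

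The technical heart of the argument is a planar $L^2$-non-concentration estimate analogous to Theorem~\ref{thm:L2-non-conentration}: for each sufficiently small $\delta>0$ there is a subsequence (again denoted by $R_n$) along which
$$
\int_{\A(R_n(1-\delta),R_n)}\lvert f_n(z)\rvert^2 e^{-2\lvert z\rvert^2}\,dA(z) = o(R_n^2) \quad\text{as } n\to\infty.
$$
I would prove this by variation. If a definite fraction $\gamma R_n^2$ of the weighted $L^2$-mass of $f_n$ were to persist in the boundary annulus, the Ameur--Hedenmalm--Makarov H\"ormander-type $\bar\partial$-estimate with polynomial growth control from \cite{AHMBerezin} would allow us to transport a portion of that mass inward and construct a polynomial competitor of comparable degree with strictly smaller value of the functional in \eqref{eqn:rhoC}, contradicting the near-optimality of $f_n$.

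With non-concentration in hand, I perform a cutoff-and-correction step. Pick a smooth radial cutoff $\chi_n$ with $\chi_n\equiv 1$ on $\D(0,R_n(1-2\delta))$, $\supp\chi_n\subset \D(0,R_n(1-\delta))$ and $\lvert\bar\partial\chi_n\rvert\leq C/(\delta R_n)$, and solve $\bar\partial u_n = (\bar\partial\chi_n)f_n$ through the AHM machinery; the Gaussian weighted $L^2$-norm of $u_n$ is controlled by that of $(\bar\partial\chi_n)f_n$, which is $o(R_n)$ by the non-concentration estimate, and $u_n$ is a polynomial of prescribed degree. The polynomial $\tilde f_n := \chi_n f_n - u_n$ then coincides with $f_n-u_n$ on $\D(0,R_n(1-2\delta))$ and equals $-u_n$ on $\C\setminus\D(0,R_n(1-\delta))$; in particular
$$
\frac{1}{R_n^2}\int_{\C\setminus\D(0,R_n)}\lvert \tilde f_n(z)\rvert^2 e^{-2\lvert z\rvert^2}\,dA(z) \to 0.
$$
Combined with a routine comparison inside $\D(0,R_n)$ showing that the $\rC$-functionals of $\tilde f_n$ and $f_n$ differ by $o(1)$, this yields $\rC^*\leq \liminf_n R_n^{-2}\int_\C\Psi_{\tilde f_n}(z,R_n)\,dA(z)\leq \rC$.

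The main obstacle is the non-concentration estimate. The Gaussian weight $e^{-2\lvert z\rvert^2}$ is analytically clean (explicit Fock-space reproducing kernel and orthogonal monomials), but the variational construction of a lower-energy competitor requires precise control of the $\bar\partial$-correction error in the bulk: one must ensure that the gain from redistributing boundary mass is not offset by the unavoidable interior perturbation of $f_n$. All other steps are expected to be mild adaptations of the hyperbolic argument, the planar Hilbert-space geometry being, if anything, simpler than its hyperbolic counterpart.
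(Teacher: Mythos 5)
Your overall architecture (cut off near the boundary, correct with the Ameur--Hedenmalm--Makarov $\bar\partial$-estimate, compare the functionals) is the same as the paper's, but the whole weight of the argument rests on the planar $L^2$-non-concentration estimate, and your proposed proof of it is only a heuristic that does not close --- as you yourself concede in your last paragraph. The contradiction scheme ``if a definite fraction of the mass sits in the boundary annulus, transport it inward via $\bar\partial$ and obtain a strictly better competitor'' fails as stated: the estimate of Theorem~\ref{thm:d-bar-planar} only gives $\int_\C\lvert u\rvert^2e^{-2\gamma\lvert z\rvert^2}dA\leq\frac{1}{2\gamma}\int_{\A}\lvert f\rvert^2\lvert\bar\partial\chi\rvert^2e^{-2\gamma\lvert z\rvert^2}dA$, so once $\lvert\bar\partial\chi\rvert^2\sim\delta^{-2}$ with the natural choice of annulus width the correction $u$ has weighted $L^2$-mass \emph{comparable to} the annular mass you are trying to remove, not smaller; in addition, the truncated competitor also loses the annular contribution to the $L^1$ term, which enters the functional with the favourable sign. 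No net strict decrease of $\rCg$ is exhibited, and the step is circular in spirit: in the paper the $\bar\partial$-machinery becomes useful only \emph{after} non-concentration is known, so non-concentration cannot be extracted from the $\bar\partial$-correction without a genuinely new ingredient.

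The paper proves the key lemma (Theorem~\ref{thm:L2-non-conc-planar}) with no $\bar\partial$-input at all, by pure dilation: compare the minimizer $f$ with $g(z)=f(\alpha^{1/2}z)$, $\alpha=(1-\delta)^2$, which excises the annulus from the quadratic term exactly; the $L^1$ cross terms are $o(1)$ by Cauchy--Schwarz together with the a priori bound from the variational identity $V'(1)=0$ (Lemma~\ref{lem:unif-norm-planar} and Lemma~\ref{lemma:L1-non-concentration-planar}); and the dilation invariance of $\Pol(\C)$ (Proposition~\ref{prop:parameter-planar}) forces $\liminf\rho_{\,\C,\gamma,\alpha}(g)\geq\rC$, so the annular mass must vanish along the good subsequence. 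You should replace your sketch by this argument or supply an alternative complete proof. Smaller issues: invoking the planar degree corollary (Corollary~\ref{cor:degree-planar}) to bound $\deg f_n$ is circular, since that corollary is a by-product of the proof of the theorem --- fortunately you never actually need it; $u_n$ is not a polynomial, it is $\chi_n f_n-u_n$ that is entire of polynomial growth and hence a polynomial by Liouville; and the ``routine comparison inside $\D(0,R_n)$'' is not routine for the $L^1$ term, which carries a negative sign: with your fixed $\delta$ the annular $L^1$ mass is only $O(\sqrt{\delta})$, so you need either the $L^1$-non-concentration lemma with $\delta\to0$ (as in Lemma~\ref{lemma:L1-non-concentration-planar}) or an iterated limit $n\to\infty$ followed by $\delta\to0$, neither of which is set up in the proposal.
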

Also analogously to the hyperbolic setting, we may say something about the degree of approximately minimal polynomials.
\begin{cor}\label{cor:degree-planar}
The densities $\rC$ and $\rC^*$ are unchanged if the infimum in \eqref{eqn:rhoC} and \eqref{eqn:rhoC-star} are taken over $\Pol_{n(R)}(\C)$ instead of over $\Pol(\C)$, where 
$$
n(R)=\left\lceil2R^2\right\rceil.
$$
\end{cor}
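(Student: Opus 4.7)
Since the functional
\[
\rho_{\C,R}(f) := \frac{1}{R^2}\int_{\D(0,R)} \Psi_f(z,R)\,dA(z)
\]
always satisfies $\rho_{\C,R}(f)\leq \rho_{\C,R}^*(f)$, and Theorem~\ref{thm:main-planar} gives $\rC = \rC^*$, the chain
\[
\rC \;\leq\; \liminf_{R\to\infty}\inf_{f\in\Pol_{n(R)}(\C)}\rho_{\C,R}(f) \;\leq\; \liminf_{R\to\infty}\inf_{f\in\Pol_{n(R)}(\C)}\rho_{\C,R}^*(f) \;\leq\; \rC^*
\]
collapses to equalities as soon as one exhibits, along some subsequence $R_j\to\infty$, polynomials $\tilde f_{R_j}\in\Pol_{n(R_j)}(\C)$ with $\rho_{\C,R_j}^*(\tilde f_{R_j})\to \rC^*$. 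Both statements of the corollary then drop out simultaneously, so the plan is to extract such $\tilde f_{R_j}$ from the construction that proves Theorem~\ref{thm:main-planar}.

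First, along a sequence $R_j\to\infty$, I pick polynomials $g_{R_j}$ approximately minimizing $\rho_{\C,R_j}$, so that $\rho_{\C,R_j}(g_{R_j})\to \rC$. Next, I invoke the planar analogue of the $L^2$-non-concentration estimate of Theorem~\ref{thm:L2-non-conentration}, which in the Fock-type setting should take the form
\[
\int_{\A((1-\delta)R_j,\,R_j)} |g_{R_j}(z)|^2\, e^{-2|z|^2}\,dA(z) \;=\; o(1)
\]
for a suitable $\delta = \delta(R_j)\to 0$. Finally, I apply the Ameur--Hedenmalm--Makarov $\bar\partial$-estimate with polynomial growth control from \cite{AHMBerezin}: choose a smooth radial cut-off $\chi$ equal to $1$ on $\D(0,(1-\delta)R_j)$ and vanishing outside $\D(0,R_j)$, solve $\bar\partial h = g_{R_j}\,\bar\partial\chi$ in $L^2(\C,e^{-2|z|^2}\,dA)$, and set $\tilde f_{R_j}:=\chi\,g_{R_j}-h$. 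The function $\tilde f_{R_j}$ is entire, and the polynomial growth control places it in $\Pol_{n(R_j)}(\C)$ with $n(R_j)=\lceil 2R_j^2\rceil$; the sharp integer value here is the natural Fock scale associated with the weight $e^{-2|z|^2}$, since the monomial $z^k$ has its weighted $L^2$ mass concentrated near $|z|^2\approx k/2$.

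It remains to verify $\rho_{\C,R_j}^*(\tilde f_{R_j})\leq \rho_{\C,R_j}(g_{R_j})+o(1)$. On the bulk $\D(0,(1-\delta)R_j)$ the difference $\tilde f_{R_j}-g_{R_j}=-h$ is controlled by the H\"ormander bound on $h$, which is small because $\bar\partial\chi$ is supported in the annular transition region where the non-concentration estimate applies. On $\A((1-\delta)R_j,R_j)$ the same estimate absorbs any additional $L^2$ contribution into $o(R_j^2)$, and on $\C\setminus\D(0,R_j)$ the polynomial $\tilde f_{R_j}=-h$ is itself the small H\"ormander correction, so the tail term that distinguishes $\rho^*$ from $\rho$ is negligible after normalization by $R_j^2$.

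The main obstacle, as I see it, is establishing the planar $L^2$-non-concentration estimate with a sharp enough rate in $\delta$, since this is what prevents mass from piling up on $\partial\D(0,R)$ and makes the cut-off construction lossless at the level of the normalized functional. Once that input is in place, the sharp polynomial growth version of the H\"ormander estimate in \cite{AHMBerezin} mechanically converts the compact support of $\bar\partial\chi$ into the precise integer degree bound $\lceil 2R^2\rceil$; a softer analysis would only yield $\lceil 2R^2\rceil+O(R)$, because of the transitional Fock modes $z^k$ with $k$ within $O(R)$ of $2R^2$, which are neither confined to nor expelled from $\D(0,R)$.
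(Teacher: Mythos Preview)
Your approach is essentially the paper's own: the polynomials $\nu=\chi f-u$ constructed in the proof of Theorem~\ref{thm:main-planar} already lie in $\Pol_{n(\gamma)}(\C)$ with $n(\gamma)=\lceil 2\gamma\rceil$ (equivalently $\lceil 2R^2\rceil$), and the paper's proof of the corollary simply records this together with $\rCg(\nu)=\rCg^*(\nu)+o(1)$ along the relevant subsequence, which is exactly the chain you set up.

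Two small points are worth tightening. First, the displayed non-concentration estimate in the $R$-variable should read $o(R_j^2)$ rather than $o(1)$; you implicitly correct this later when you speak of absorbing contributions into $o(R_j^2)$. Second, the paper applies the $L^2$-non-concentration (Theorem~\ref{thm:L2-non-conc-planar}) and the preceding variational identity (Lemma~\ref{lem:unif-norm-planar}) to \emph{exact} holomorphic minimizers $f_\gamma$, not to approximately minimizing polynomials $g_{R_j}$; the identity $V'(1)=0$ genuinely needs minimality, so if you insist on starting from polynomials you should first pass to the exact minimizer (which the paper shows exists by a normal-families argument) before cutting off and correcting.
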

It thus suffices to consider polynomials of degrees that are essentially proportional to the area of the disk $\D(0,R)$. Here the factor 2 is natural, since each zero carries a mass of $1/2$.

By a change of variables, we may perform the calculation
$$
\frac{1}{R^2}\int_{\D(0,R)}\Psi_f(z,R)dA(z)=\int_{\D}\left(\lvert f(Rw)\rvert e^{-R^2\lvert w\rvert^2}-1\right)^2dA(w).
$$
Since the dilation $f\mapsto f_R$ where $f_R(z)=f(Rz)$ will not affect holomorphicity of $f$, we may as well use the functionals
\begin{align}
\label{eq:rg}\rCg(f)&=\int_{\D}\left(\lvert f(z)\rvert e^{-\gamma\lvert z\rvert^2}-1\right)^2dA(z), \\
\label{eq:rg-star}\rCg^*(f)&=\int_{\C}\left(\lvert f(z)\rvert e^{-\gamma\lvert z\rvert^2}-1_{\D}(z)\right)^2dA(z),
\end{align}
and instead obtain the densities by
$$
\rC=\liminf_{\gamma\to\infty}\inf_f\rCg(f),\qquad \rC^*=\liminf_{\gamma\to\infty}\inf_f\rCg^*(f).
$$
For the purpose of this paper, it turns out to be more convenient to work with this formulation. 

Theorem~\ref{thm:main-planar} could be seen as a toy problem for Theorem~\ref{thm:main-hyperbolic}, 
in that the $\bar\partial$-estimates are slightly more readily applicable in this setting.
Since the proof of this illustrates the methods used very transparently, we present it first.

\subsection{Relation to Bose-Einstein Condensates}
The planar density $\rC$ is part of a bigger family of densitites, $\rho^{\langle \beta\rangle}_{\C}$, defined for $\beta>0$ by
$$
\rho^{\langle \beta\rangle}_{\C}=\liminf_{R\to\infty}\inf_{f}\int_{\D(0,R)}\left(\lvert f(z)\rvert^\beta e^{-\lvert z\rvert^2}-1\right)^2dA(z).
$$
The case $\beta=1$ is the density $\rC$. Also of particular interest is the case $\beta=2$, which can be traced back 
to work by Abrikosov \cite{Abrikosov} on Bose-Einstein Condensates. 
Abrikosov suggested that it should be enough to look for minimizers among functions quasiperiodic with respect to lattices. 
The conjecture, which is attributed to Abrikosov in \cite{HedenmalmZeroPacking}, is that the equilateral triangular 
lattice should be the correct choice for any $\beta$. 

Consider the triangular lattices $2\omega_1\mathbb{Z}+2\omega_2\mathbb{Z}$, where $\omega_1\in\R^+$ and $\omega_2=\omega_1 e^{i\theta}$. 
For each lattice, good candidates $f_0=f_{0,\beta,\theta}$ for minimizers of $\rho^{\langle \beta\rangle}_{\C}(f)$ are 
given explicitly in terms of Weierstrass' $\sigma$-function, see \cite{HedenmalmZeroPacking}.
A numerical computation using this choice yields the value
$$
\rho^{\langle 1\rangle}_{\C}(f_0)=\lim_{R\to\infty}\frac{1}{R^2}\int_{\D(0,R)}\left(\lvert f_0(z)\rvert e^{-\lvert z\rvert^2}-1\right)^2dA(z)=0.061203\ldots,
$$
which in particular gives a numerical bound $\rC\leq 0.061203$. 
\begin{figure}[h]
    \centering
    \includegraphics[width=0.6\textwidth]{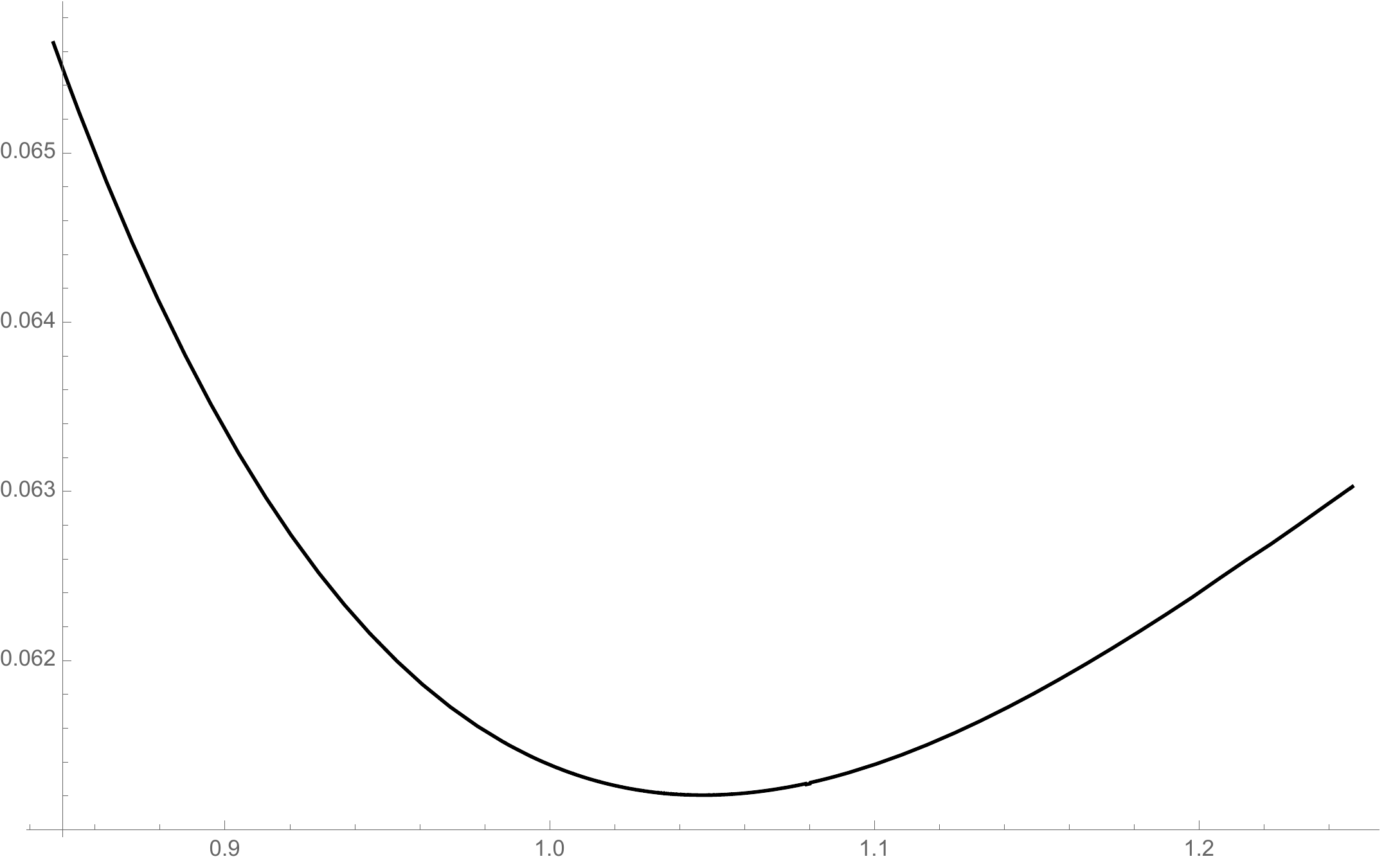}
    \caption{$\rho^{\langle 1\rangle}_{\C}(f_{0,\theta})$ for $\theta$ near $\pi/3$, plotted using {\textsc{Mathematica}}}
    \label{fig:rho-theta}
\end{figure}

In Figure~\ref{fig:rho-theta}, we have plotted $\rho^{\langle 1\rangle}_{\C}(f_{0,\theta})$ for triangular lattices with different angles. 
The minimum appears to be at $\theta=\pi/3$, in support of the conjecture that the equilateral triangular lattice is optimal.

\subsection{Notation and special conventions}
By $\D(z_0, r)$ we mean the open disk centred at $z_0\in \C$ with radius $r>0$, 
and by $\A(z_0, r,R)$ we mean an open annulus $\D(z_0,R)\setminus \overline{\D(z_0,r)}$, where $R>r>0$. 
When $z_0=0$ we simply denote the annulus by $\A(r,R)$.

By $dA$ we mean the normalized area measure,
$$
dA(z)=\frac{dxdy}{\pi},\qquad z=x+iy\in\C.
$$
We shall make frequent use of the Cuachy-Riemann operators
$$
\partial =\frac12\left(\frac{\partial}{\partial x}-i\frac{\partial}{\partial y}\right), \qquad  \bar\partial =\frac12\left(\frac{\partial}{\partial x}+i\frac{\partial}{\partial y}\right).
$$
We use the Laplacian $\Delta$, which is a quarter of the usual Laplacian, where this normalization is chosen so that it factorizes as 
$$
\Delta=\partial\bar\partial.
$$
We will frequently consider $\bar\partial$-equations of the kind
\begin{equation}\label{eqn:d-bar}
\bar\partial  u= f,\qquad f\in L^2(\mathscr{T}, d\mu),
\end{equation}
where $\mathscr{T}$ is some compact subset of $\C$ and $\mu$ is a measure. By a solution $u$ to \eqref{eqn:d-bar}, 
we mean an element $u\in W^{1,2}(d \mu, \mathscr{T})$ such that $\bar\partial u=f$ in $L^2(d\mu, \mathscr{T})$.

\subsection{Acknowledgements} 
I want to thank H{\aa}kan Hedenmalm for his generousity with his insights, 
for useful comments while preparing the manuscript and not least for suggesting the study of this problem.
I'm greatful to Oleg Ivrii for numerous suggestions on the manuscript, and for him sharing his insights about intriguing remaining questions.
Ivrii \cite{PrivateIvrii} has suggested another approach to this problem, based on entirely different methods, which I hope will be further explored.
Thanks also to Simon Larson for inspiring discussions and for proof-reading.

\section{Preliminaries}
\subsection{Function spaces with polynomial growth}
By $\Pol_n(\C)$ we mean the space of polynomials of degree at most $n-1$, 
and we denote by $\Pol(\C)$ the space of all polynomials.

Let $\phi$ denote a real-valued function defined on some domain $\Omega$, 
possibly the entire plane. 
By $L^2_\phi=L^2_\phi(\Omega)$ we mean the usual $L^2$-space with inner product
$$
\left\langle f,g\right\rangle_\phi:=\int_{\Omega}f(z)\overline{g(z)}e^{-\phi(z)}dA(z).
$$
We denote by $A(\Omega)$ the set of holomorphic functions on $\Omega$, 
and let $A^2_\phi=A^2_\phi(\Omega)$ be the intersection 
$$
A^2_\phi=A(\Omega)\cap L^2_\phi,
$$
endowed with the inner product inherited from $L^2_\phi$.

When $\Omega=\C$, we also consider the spaces
$$
L^2_{n,\phi}=\left\{f\in L^2_\phi : \lvert f(z)\rvert \leq \lvert z\rvert^{n-1}+O(1) \text{\;as\;} z\to\infty\right\},
$$
with a polynomial growth restriction at infinity, and the space
$$
A^2_{n,\phi}=L^2_\phi(\C)\cap\Pol_n(\C)=A^2_\phi(\C)\cap L^2_{n,\phi}(\C).
$$
We will be especially concerned with the spaces $A^2_{\phi}$ and $A^2_{n,\phi}$ in the 
cases when $\phi=\log\frac{1}{1-\lvert z\rvert^2}$ and $\phi=c\lvert z\rvert^2$ for some constant $c>0$.
In the literature these are often referred to as (polynomial) Bergman and Fock spaces, respectively.

\subsection{Cut-off functions} \label{ss:cut-off}
We will find the need to make use of cut-off functions $\chi=\chi_{\delta,r}$ that are 
identically one on a disk $\D(0,r(1-\delta))$, and vanish off the slightly bigger disk $\D(0,r)$. 
These can be chosen so as to satisfy the estimates
$$
\lvert \bar\partial \chi_{\delta,r}\rvert^2\leq \frac{C}{\delta^2r^2} 1_{\A((1-\delta)r,r)},\qquad \lVert \bar\partial\chi_{\delta,r}\rVert_{L^2}^2\leq \frac{4}{\delta},
$$
for $0<r<1$ and $0<\delta<1$.
An example of such a function is given by
\begin{equation}\label{eqn:cut-off}
\chi_{\delta,r}(z)=
\begin{cases}
1, & \lvert z\rvert \leq (1-\delta)r \\
\left(\frac{1}{\delta}-\frac{\lvert z\rvert}{\delta r}\right)^2, & (1-\delta)r< \lvert z\rvert\leq r \\
0, & \lvert z\rvert >r
\end{cases}
\end{equation}
Note that this function is merely Lipschitz. In case one requires more regularity, 
it suffices to note that the above properties should be stable under smoothing procedures, such as convolution.

\subsection{A $\bar\partial$-estimate with polynomial growth control}
We rely on methods from the work of Ameur, Hedenmalm and Makarov \cite{AHMBerezin}. 
They prove a version of H{\"o}rmander's classical $\bar\partial$-estimates, which gives polynomial growth control at infinity.
Here we only need the following direct special case of \cite[Theorem~4.1]{AHMBerezin}.

\begin{thm}\label{thm:AHMBerezin}
Let $\mathscr{T}$ be a compact subset of $\C$, 
and denote by $\phi, \widehat\phi$ two real-valued functions on $\C$ of class $\mathscr{C}^{1,1}$, such that
\begin{itemize}
\item $\phi(z)= \widehat\phi(z)$ for $z\in\mathscr{T}$ and $\widehat\phi(z)\leq \phi(z)$ for $z\in\C$,
\item $\Delta\widehat\phi>0$ on $\mathscr{T}$, and $\Delta\widehat\phi\geq 0$ on $\C$.
\item $\widehat\phi(z)=\tau\log \lvert z\rvert^2+O(1)$ as $z\to\infty$.
\end{itemize}
Then, for any integer $n\geq \tau$ and $f\in L^\infty(\mathscr{T})$, the $L^2_{n,\phi}$-minimal solution $u_{0,n}$ to $\bar\partial u=f$ exists and satisfies
$$
\int_\C\lvert u_{0,n}\rvert^2 e^{-\phi}dA\leq \int_{\mathscr{T}}\lvert f\rvert^2 \frac{e^{-\phi}}{\Delta\widehat\phi}dA.
$$
\end{thm}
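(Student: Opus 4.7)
The statement Theorem~\ref{thm:AHMBerezin} is quoted from \cite{AHMBerezin}, so what I would produce is really a reproduction of the classical argument using the auxiliary weight $\widehat\phi$. The plan is to run Hörmander's $L^2$-estimate for $\bar\partial$ against the \emph{smaller} weight $\widehat\phi$, and then to use the logarithmic growth of $\widehat\phi$ at infinity to force polynomial growth of the solution.

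First I would produce an auxiliary solution $\widetilde u$ to $\bar\partial\widetilde u = f$ by Hörmander's classical $L^2$-theorem applied with weight $e^{-\widehat\phi}$, giving
\begin{equation*}
\int_{\C}\lvert \widetilde u\rvert^2 e^{-\widehat\phi}\,dA \;\leq\; \int_{\mathscr{T}}\lvert f\rvert^2\,\frac{e^{-\widehat\phi}}{\Delta\widehat\phi}\,dA.
\end{equation*}
Since $f\in L^\infty(\mathscr{T})$ is supported on the compact set $\mathscr{T}$, where $\Delta\widehat\phi>0$, the right-hand side is finite. The hypothesis only supplies $\Delta\widehat\phi\geq 0$ off $\mathscr{T}$; in case this causes trouble for the direct application of Hörmander, I would regularize $\widehat\phi\rightsquigarrow \widehat\phi+\varepsilon\lvert z\rvert^2$, apply the inequality, and pass to the limit $\varepsilon\to 0^+$ noting that the right-hand side is uniformly controlled since $\Delta\widehat\phi>0$ is already strict on the support of $f$.

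Next I would extract the polynomial growth of $\widetilde u$ at infinity from the logarithmic growth of $\widehat\phi$. Outside some large disk containing $\mathscr T$, $\widetilde u$ is holomorphic, hence admits a Laurent expansion $\widetilde u(z)=\sum_{k\in\mathbb{Z}} c_k z^k$ convergent there. The finite $L^2(e^{-\widehat\phi})$-norm combined with $\widehat\phi(z)=\tau\log\lvert z\rvert^2 + O(1)$ gives
\begin{equation*}
\sum_{k}\lvert c_k\rvert^2\int_{R}^\infty r^{2k-2\tau+1}\,dr<\infty,
\end{equation*}
which forces $c_k=0$ for every $k\geq\tau$. Together with the fact that $\widetilde u$ is globally defined on $\C$, this yields $\lvert\widetilde u(z)\rvert\leq C\lvert z\rvert^{\tau-1}+O(1)$ as $z\to\infty$, so $\widetilde u\in L^2_{n,\widehat\phi}\subset L^2_{n,\phi}$ for every $n\geq\tau$.

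Finally I would convert weights and then pass to the minimal solution. Using $\phi\geq\widehat\phi$ on $\C$ and $\phi=\widehat\phi$ on $\mathscr{T}$,
\begin{equation*}
\int_{\C}\lvert \widetilde u\rvert^2 e^{-\phi}\,dA \;\leq\; \int_{\C}\lvert \widetilde u\rvert^2 e^{-\widehat\phi}\,dA \;\leq\; \int_{\mathscr{T}}\lvert f\rvert^2\,\frac{e^{-\widehat\phi}}{\Delta\widehat\phi}\,dA \;=\; \int_{\mathscr{T}}\lvert f\rvert^2\,\frac{e^{-\phi}}{\Delta\widehat\phi}\,dA.
\end{equation*}
The affine space of solutions in $L^2_{n,\phi}$ differs from $\widetilde u$ by elements of $A^2_{n,\phi}=\Pol_n(\C)$, so the $L^2_\phi$-minimal representative $u_{0,n}$ is the orthogonal projection of $\widetilde u$ onto the affine class and in particular satisfies $\lVert u_{0,n}\rVert_{L^2_\phi}\leq\lVert \widetilde u\rVert_{L^2_\phi}$, which yields the claimed bound.

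The only genuinely delicate step is the extraction of polynomial growth from the weighted $L^2$-bound: one must be careful that the Laurent expansion is valid on \emph{all} of $\{\lvert z\rvert>R\}$ with $R$ large enough to contain $\mathscr{T}$, and that the $L^2$-integrability against $e^{-\widehat\phi}$ translates cleanly term by term into the bound $k<\tau$. Everything else is bookkeeping with the weight comparison $\widehat\phi\leq\phi$ and the standard fact that $\bar\partial\colon L^2_{n,\phi}\to L^2_\phi$ has kernel $\Pol_n(\C)$.
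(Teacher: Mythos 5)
The paper itself offers no proof of this statement: it is imported verbatim as a special case of Theorem~4.1 of \cite{AHMBerezin}, and the only original content is the remark following it, which (a) allows $\phi=+\infty$ off $\mathscr{T}$ by running the estimate for the pair $(\widehat\phi,\widehat\phi)$ and using $L^2_{n,\widehat\phi}\subset L^2_{n,\phi}$, and (b) removes the strict positivity of $\Delta\widehat\phi$ off $\mathscr{T}$ via the regularization $\widehat\phi_\epsilon=(1-\epsilon/\tau)\widehat\phi+\epsilon\log(1+\lvert z\rvert^2)$, chosen precisely so that the logarithmic growth exponent $\tau$ is preserved and the cited theorem applies as stated. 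Your reconstruction --- classical one-variable H{\"o}rmander with the weight $e^{-\widehat\phi}$, polynomial growth of the solution extracted a posteriori from the Laurent expansion and the finiteness of $\int_\C\lvert\widetilde u\rvert^2e^{-\widehat\phi}dA$, then the weight comparison $\widehat\phi\leq\phi$ and projection onto the affine solution class $\widetilde u+\left(\Pol_n(\C)\cap L^2_\phi\right)$ --- is the standard route to the cited result, is correct in outline, and automatically reproduces remark (a). Two points deserve more care than you give them. First, your regularization $\widehat\phi+\varepsilon\lvert z\rvert^2$ destroys the logarithmic growth at infinity, so the approximate solutions $u_\varepsilon$ carry no growth information; the argument still closes only because you pass to a weak $L^2_{\mathrm{loc}}$ limit $u$ and observe that for each compact $K$ and each fixed $\varepsilon_0>0$ one has $\int_K\lvert u\rvert^2e^{-\widehat\phi-\varepsilon_0\lvert z\rvert^2}dA\leq\int_{\mathscr T}\lvert f\rvert^2\frac{e^{-\phi}}{\Delta\widehat\phi}dA$, after which monotone convergence in $\varepsilon_0$ and exhaustion in $K$ give the bound --- spell this out, or simply use the paper's regularization, which keeps the exponent $\tau$ fixed and avoids the limit extraction. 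Second, the Laurent argument gives $c_k=0$ for $k\geq\tau$, hence $\lvert\widetilde u(z)\rvert=O\left(\lvert z\rvert^{\lceil\tau\rceil-1}\right)$, which lands in the growth class of $L^2_{n,\phi}$ precisely because $n$ is an \emph{integer} with $n\geq\tau$; and the kernel of $\bar\partial$ on $L^2_{n,\phi}$ is $\Pol_n(\C)\cap L^2_\phi$ (finite-dimensional, hence closed), not all of $\Pol_n(\C)$ --- this intersection is what guarantees both the existence of the minimal solution $u_{0,n}$ and the inequality $\lVert u_{0,n}\rVert_{L^2_\phi}\leq\lVert\widetilde u\rVert_{L^2_\phi}$.
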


\begin{rem} a)
We remark that we may allow the function $\phi$ to take on the value $+\infty$ on $\mathscr{T}^c$. That this is the case can be seen by applying the theorem to the pair $(\widehat\phi,\widehat\phi)$ to obtain that the $L^2_{n,\widehat\phi}$-minimal solution $v_0$ to $\bar\partial v=f$ satisfies
$$
\int_\C\lvert v_0\rvert^2 e^{-\widehat\phi}dA\leq \int_\mathscr{T}\lvert f\rvert^2\frac{e^{-\phi}}{\Delta\phi}dA.
$$
Since $L^2_{n, \widehat\phi}\subset L^2_{n, \phi}$, it follows that $\int \lvert u_0\rvert^2e^{-\phi}dA\leq \int \lvert v_0\rvert^2e^{-\phi}dA$. We may therefore infer the desired result from the fact that $\widehat\phi\leq \phi$.

b) The original theorem pertains to a wider class of $\widehat\phi$ in terms of growth at infinity than considered here, but requires that $\Delta\widehat\phi>0$ on the entire plane. For the specific form of $\widehat{\phi}$ considered here, this requirement may easily be removed by approximation, as is done in \cite[Section~4.4]{AHMBerezin}. Indeed, by letting
$$
\widehat\phi_\epsilon(z)=\left(1-\frac{\epsilon}{\tau}\right)\widehat\phi(z)+\epsilon\log(1+\lvert z\rvert^2)
$$
for $\epsilon>0$ and applying the theorem to $(\phi,\widehat\phi_\epsilon)$, the desired inequality follows by letting $\epsilon\to0$.
\end{rem}

\section{The Planar Case: Proof of Theorem~\ref{thm:main-planar}}
\subsection{The fundamental $\bar\partial$-estimate}
Recall the functionals $\rCg(f)$ and $\rCg^*(f)$ from \eqref{eq:rg} and \eqref{eq:rg-star}. To prove that $\rC=\rC^*$, we follow the approach suggested in \cite{HedenmalmZeroPacking}, which is to modify minimizers $f$ of $\rCg(f)$ outside $\D$ so as to make sure that $\rCg^*(f)$ is close to $\rCg(f)$.
This is done in two steps: first one multiplies $f$ by a cut-off function $\chi$ that vanishes outside $\D$. 
Secondly, one must correct $\chi f$ so that it once again becomes a polynomial. 
This is done using the H{\"o}rmander-type $\bar\partial$-techniques of \cite{AHMBerezin}, i.e. Theorem~\ref{thm:AHMBerezin} above.

Denote by $\chi=\chi_{\delta, 1}$ the cut-off function from Subsection~\ref{ss:cut-off}. 
We will apply Theorem~\ref{thm:AHMBerezin} to the equation $\bar\partial u=\bar\partial(\chi f)=f\bar\partial\chi$, where $f$ is a minimizer of $\rCg(f)$. 
One observes that $u$ is then the desired correction: $\bar\partial(\chi f-u)=f\bar\partial \chi-u=0$, so $\chi f-u$ is holomorphic. 
We let $\mathscr{T}=\D$, $\phi(z)=2\gamma\lvert z\rvert^2$ and we define $\widehat\phi$ to be the unique function satisfying
\begin{itemize}
\item $\widehat{\phi}=\phi$ on $\D$, 
\item $\widehat\phi\in\mathscr{C}^{1,1}$ and $\Delta\phi\geq 0$.
\item $\widehat\phi$ is pointwise minimal with these conditions satisfied.
\end{itemize}
Since $\phi$ is radial, this reversed obstacle problem is easy, 
and we can give an explicit formula for $\widehat\phi$:
$$
\widehat\phi(z)=\begin{cases}2\gamma\lvert z\rvert^2, &\lvert z\rvert <1 \\ 2\gamma\log\lvert z\rvert^2+2\gamma, & \lvert z\rvert\geq 1.\end{cases}
$$
To verify that this formula is correct, we note that $\widehat\phi=\phi$ on $\D$ and 
$\partial_n\widehat\phi=\partial_n\phi$ on $\partial\D$, so since $\widehat{\phi}$ is smooth away from $\partial\D$, it inherits the $\mathscr{C}^{1,1}$-regularity from $\phi$. 
That $\widehat\phi$ is harmonic for $\lvert z\rvert>1$ ensures minimality by use of the maximum principle.
It is also easy to verify that $\widehat\phi\leq\phi$.

Thus all conditions of Theorem~\ref{thm:AHMBerezin} are satisfied,
and we may infer that the $L^2_{n,\phi}$-minimal solution $u$ to the $\bar\partial$-equation satisfies
$$
\int_\C\lvert u(z)\rvert^2e^{-2\gamma\lvert z\rvert^2}dA(z)\leq \int_{\mathscr{T}}\lvert \bar\partial(\chi f)\rvert^2 \frac{e^{-\phi}}{\Delta\phi}dA(z)=\frac{1}{2\gamma}\int_{\A(1-\delta, 1)}\lvert f(z)\rvert^2\lvert \bar\partial\chi(z)\rvert^2e^{-2\gamma\lvert z\rvert^2}dA(z).
$$
We have thus arrived at the following result, which controls the $L^2$-norm of the correction to non-holomorphicity of $\chi f$.

\begin{thm}\label{thm:d-bar-planar}
Let $\gamma>0$ and let $f$ be a bounded holomorphic function on the unit disk. 
Then there exists a solution $u$ to $\bar\partial u=\bar\partial(\chi f)$ that satisfies the estimate
$$
\int_{\C}\lvert u(z)\rvert^2e^{-2\gamma\lvert z\rvert^2}d A(z)\leq \frac{1}{2\gamma}\int_{\A((1-\delta),1)}\lvert f(z)\rvert^2\,\lvert\bar\partial \chi(z)\rvert^2e^{-2\gamma\lvert z\rvert^2}d A(z),
$$
with polynomial growth control
$$
\lvert u(z)\rvert = O\left(\lvert z\rvert^{n-1}\right),\qquad z\to\infty
$$
where $n=n(\gamma)=\lceil 2\gamma\rceil$.
\end{thm}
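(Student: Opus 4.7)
The plan is to deduce Theorem \ref{thm:d-bar-planar} as a direct application of Theorem \ref{thm:AHMBerezin}, after setting up the weight pair $(\phi,\widehat\phi)$ appropriate to the planar (Fock) setting and verifying that $\bar\partial(\chi f)$ satisfies the boundedness hypothesis. First I would note that because $f$ is bounded on $\D$ and $\bar\partial\chi$ is bounded with support in the annulus $\A(1-\delta,1)$, the datum $g:=\bar\partial(\chi f)=f\bar\partial\chi$ belongs to $L^\infty(\D)$, so Theorem \ref{thm:AHMBerezin} is applicable with $\mathscr{T}=\overline{\D}$.

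Next I would choose $\phi(z)=2\gamma\lvert z\rvert^2$ and construct $\widehat\phi$ as the ``obstacle envelope'': the pointwise smallest $\mathscr{C}^{1,1}$ function which coincides with $\phi$ on $\overline{\D}$, is subharmonic on $\C$, and grows like $\tau\log\lvert z\rvert^2+O(1)$ at infinity for some $\tau$. Since $\phi$ is radial and strictly subharmonic, the problem reduces to a one-dimensional matching: outside $\overline\D$ the function $\widehat\phi$ should be harmonic (so of the form $a\log\lvert z\rvert^2+b$), with values and normal derivatives matching $\phi$ at $\partial\D$. The matching conditions force $a=2\gamma$ and $b=2\gamma$, yielding the explicit formula $\widehat\phi(z)=2\gamma\lvert z\rvert^2$ for $\lvert z\rvert<1$ and $\widehat\phi(z)=2\gamma\log\lvert z\rvert^2+2\gamma$ for $\lvert z\rvert\geq 1$. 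From this I would verify: $\widehat\phi\leq\phi$ everywhere (by the maximum principle applied to the harmonic extension); $\widehat\phi$ is $\mathscr{C}^{1,1}$ (smooth away from $\partial\D$ and $\mathscr{C}^1$ across $\partial\D$ by construction, with a bounded jump in second derivatives); $\Delta\widehat\phi=2\gamma>0$ on $\D$ and $\Delta\widehat\phi=0$ off $\overline\D$; and $\tau=2\gamma$, so the admissibility $n\geq\tau$ becomes $n\geq\lceil 2\gamma\rceil$.

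With all hypotheses of Theorem \ref{thm:AHMBerezin} in place, I would invoke it to obtain the $L^2_{n,\phi}$-minimal solution $u=u_{0,n}$ to $\bar\partial u=g$ satisfying
$$
\int_{\C}\lvert u\rvert^2 e^{-\phi}\,d A\leq \int_{\overline{\D}}\lvert g\rvert^2\,\frac{e^{-\phi}}{\Delta\widehat\phi}\,d A.
$$
Since $\Delta\widehat\phi=2\gamma$ on $\D$, this yields exactly the asserted $L^2$-estimate, and because $\supp\bar\partial\chi\subset \A(1-\delta,1)$ the right-hand side localizes to that annulus. The polynomial growth control $\lvert u(z)\rvert=O(\lvert z\rvert^{n-1})$ as $z\to\infty$ is built into the definition of $L^2_{n,\phi}$ via the class $L^2_{n,\phi}\subset\{v:\lvert v(z)\rvert\leq\lvert z\rvert^{n-1}+O(1)\}$, completing the proof.

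The only non-routine step is the explicit construction and verification of $\widehat\phi$, but radial symmetry makes the reversed obstacle problem trivial; a mild technical point is that $\Delta\widehat\phi$ vanishes off $\overline\D$, and so one must invoke the approximation trick from Remark (b) after Theorem \ref{thm:AHMBerezin} (replacing $\widehat\phi$ by $(1-\epsilon/\tau)\widehat\phi+\epsilon\log(1+\lvert z\rvert^2)$ and letting $\epsilon\to 0$) to remain within the literal hypotheses of the cited result. No further obstacles arise.
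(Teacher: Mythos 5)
Your proposal is correct and follows essentially the same route as the paper: the same weight pair $\phi(z)=2\gamma\lvert z\rvert^2$ with the radial obstacle envelope $\widehat\phi$ (equal to $2\gamma\log\lvert z\rvert^2+2\gamma$ outside $\D$), the same application of Theorem~\ref{thm:AHMBerezin} with $\Delta\widehat\phi=2\gamma$ on $\D$ giving the factor $\tfrac{1}{2\gamma}$, and the growth bound $n=\lceil 2\gamma\rceil$ from $\tau=2\gamma$. Your explicit appeal to the approximation remark for $\Delta\widehat\phi\geq 0$ off $\overline{\D}$ is a detail the paper handles in its remark following Theorem~\ref{thm:AHMBerezin}, so nothing is missing.
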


\subsection{Existence and a priori control of minimizers}
We first obeseve that for a fixed $\gamma<\infty$, there exists a holomorphic function $f_0$ on $\D$ that minimizes $\rCg(f)$. 
Indeed, let $f_n\in\Pol_n(\C)$ be a sequence of polynomials, for which $\rCg(f_n)\to\inf\rCg(f)$. 
We may assume that they are abolute minimizers within their respective spaces $Pol_n(\C)$. Let $\phi=2\gamma\lvert z\rvert^2$. 
A simple variational argument, see the proof of Lemma~\ref{lem:unif-norm-planar} below, shows that the $L^2$-norms 
$\lVert f_n\rVert_{A^2_\phi(\D)}$ are uniformly bounded.
Denote by $K_\phi(z,w)$ the reproducing kernel for the space $A^2_\phi(\D)$. 
By Cauchy--Schwarz inequality, one finds the pointwise bound
$$
\lvert f(z)\rvert^2\leq \lVert K_\phi(\,\cdot\,, z)\rVert^2_\phi \,\lVert f\rVert_{A^2_\phi}^2, \qquad z\in\D
$$
for $f\in A^2_\phi$, which yields a uniform bound $\lVert f_{n\, \vert K}\rVert_{\infty}\leq M_K$ independently of $n$, 
for each fixed compact subset $K$ of $\D$.
By a normal families argument, there exists a holomorphic function $f_0$ and a subsequence $\{n_k\}$ along which $f_{n_k}\to f_0$ uniformly on compact subsets. By Fatou's Lemma we find that
$$
\rCg(f_0)\leq \liminf_{k\to\infty}\rCg(f_{n_k}),
$$
so $f_0$ is indeed a minimizer.

These minimizers turn out to have good properties, even uniformly in the parameter $\gamma$.
\begin{lemma}\label{lem:unif-norm-planar}
Assume that $f=f_\gamma$ is a minimizer of $\rCg(f)$. Then
$$
\int_{\D}\lvert f(z)\rvert^2e^{-2\gamma\lvert z\rvert^2}dA(z)=\int_{\D}\lvert f(z)\rvert e^{-\gamma\lvert z\rvert^2}dA(z),
$$  
and both expressions are bounded as $\gamma\to\infty$.
\end{lemma}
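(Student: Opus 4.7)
The plan is to use the scalar variation $t \mapsto \rCg(tf)$ for $t > 0$. Expanding the square, one finds
\begin{equation*}
\rCg(tf) = t^2 A(\gamma) - 2t\,B(\gamma) + 1,
\end{equation*}
where $A(\gamma) = \int_{\D}\lvert f\rvert^2 e^{-2\gamma\lvert z\rvert^2}\,dA$ and $B(\gamma) = \int_{\D}\lvert f\rvert e^{-\gamma\lvert z\rvert^2}\,dA$ are the two expressions in the statement of the lemma, and the constant $1$ arises from the normalization $\int_{\D}dA = 1$. The admissible class of competitors is invariant under positive scaling and $f$ is a minimizer, so the quadratic on the right must attain its minimum on $(0,\infty)$ at $t=1$. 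The vanishing of its derivative there reads $A(\gamma) = B(\gamma)$, which is the desired identity.

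For the uniform bound as $\gamma\to\infty$, plug the relation $A(\gamma)=B(\gamma)$ back into the expression to obtain $\rCg(f) = 1 - A(\gamma)$. The pointwise non-negativity of the integrand forces $\rCg(f)\geq 0$, whence $A(\gamma)\leq 1$ independently of $\gamma$, and by the identity the same bound is inherited by $B(\gamma)$.

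I do not anticipate a genuine obstacle; the argument is a one-variable calculus minimization. The only bookkeeping concerns which admissible class the variation is carried out in. If the infimum is taken over $\Pol(\C)$, then $tf_n\in\Pol(\C)$ for each polynomial minimizer $f_n$, so the identity holds for each $f_n$ and passes to the normal-families limit $f_0$ by the argument already given in the paragraphs preceding the lemma. Alternatively, one may enlarge the admissible class to $A(\D)\cap L^2_{2\gamma\lvert z\rvert^2}(\D)$, in which $f_0$ is directly a minimizer and the variation is applied to it without any approximation.
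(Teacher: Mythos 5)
Your proposal is correct and follows essentially the same route as the paper: the scalar variation $t\mapsto\rCg(tf)$, the vanishing derivative at $t=1$ giving the identity, and then $\rCg(f)=1-\int_{\D}\lvert f\rvert e^{-\gamma\lvert z\rvert^2}dA\geq 0$ yielding the uniform bound. Your extra remark about which admissible class the variation is performed in (per-degree minimizers in $\Pol_n(\C)$, or an enlarged holomorphic class) addresses a point the paper passes over with the same brevity, so nothing further is needed.
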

\begin{proof}
Define $V(\alpha)$ for $\alpha>0$ by
$$
V(\alpha)=\rCg(\alpha f)
$$
Since $\alpha f$ is holomorphic whenever $f$ is, 
it is clear that we may vary $\alpha$ within the class of admissible functions for the infimum. It follows that
$V'(1)=0$, which after expanding the square reads
$$
\int_{\D}\lvert f(z)\rvert^2e^{-2\gamma\lvert z\rvert^2}dA(z)=\int_{\D}\lvert f(z)\rvert e^{-\gamma\lvert z\rvert^2}dA(z),
$$
which is exactly the first assertion. Using this property, one finds that
$$
\rCg(f)=1-\int_{\D}\lvert f(z)\rvert e^{-\gamma\lvert z\rvert^2}dA(z)
$$
and since $\rCg(f)>0$, this implies the boundedness assertion.
\end{proof}
The next results controls the $L^1$-norm of minimizers near $\partial\D$, 
and will be referred to as the {\em $L^1$-non-concentration estimate}.
\begin{lemma}\label{lemma:L1-non-concentration-planar}
Assume that $\delta=\delta_\gamma=o(1)$ as $\gamma\to\infty$. 
Let $\{f_\gamma\}$ be a sequence of minimizers of $\rCg(f)$. Then
$$
\int_{\A(1-\delta, 1)}\lvert f(z)\rvert e^{-\gamma\lvert z\rvert^2}d A(z)=o(1), \qquad \gamma\to\infty.
$$
\end{lemma}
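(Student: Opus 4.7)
The plan is to deduce the $L^1$-non-concentration estimate directly from the uniform $L^2$-bound provided by Lemma~\ref{lem:unif-norm-planar}, using nothing more than the Cauchy--Schwarz inequality and the fact that the (normalized) area of the annulus $\A(1-\delta_\gamma, 1)$ tends to zero.

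\textbf{Step 1: Reduce to an $L^2$-estimate via Cauchy--Schwarz.}
Writing $|f_\gamma(z)| e^{-\gamma |z|^2} = (|f_\gamma(z)| e^{-\gamma |z|^2}) \cdot 1$ and applying Cauchy--Schwarz on $\A(1-\delta_\gamma, 1)$ gives
$$
\int_{\A(1-\delta_\gamma, 1)} |f_\gamma(z)|\, e^{-\gamma |z|^2}\, dA(z)
\leq \left(\int_{\A(1-\delta_\gamma, 1)} |f_\gamma(z)|^2\, e^{-2\gamma |z|^2}\, dA(z)\right)^{1/2} \left(\int_{\A(1-\delta_\gamma, 1)} dA\right)^{1/2}.
$$

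\textbf{Step 2: Control each factor.}
The first factor is bounded above by the full $L^2$-norm $\bigl(\int_\D |f_\gamma|^2 e^{-2\gamma |z|^2}\, dA\bigr)^{1/2}$, which is uniformly bounded in $\gamma$ by Lemma~\ref{lem:unif-norm-planar}. For the second factor, the normalized area of the annulus equals $1 - (1-\delta_\gamma)^2 = 2\delta_\gamma - \delta_\gamma^2$, which tends to zero by hypothesis on $\delta_\gamma$.

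\textbf{Step 3: Combine.}
Putting the two bounds together yields
$$
\int_{\A(1-\delta_\gamma, 1)} |f_\gamma(z)|\, e^{-\gamma |z|^2}\, dA(z) \leq C\sqrt{2\delta_\gamma - \delta_\gamma^2} = o(1),\qquad \gamma \to \infty,
$$
which is the claim. There is in fact no genuine obstacle here: the lemma is a soft consequence of the a priori $L^2$-bound, and its role in the sequel is simply to prepare the way for the more delicate $L^2$-non-concentration estimate (Theorem~\ref{thm:L2-non-conentration}), where the shrinking area of the annulus no longer suffices on its own and a sharper argument exploiting the $\bar\partial$-estimate of Theorem~\ref{thm:d-bar-planar} becomes necessary.
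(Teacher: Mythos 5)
Your proof is correct and is essentially identical to the paper's own argument: Cauchy--Schwarz on the annulus, the uniform $L^2$-bound from Lemma~\ref{lem:unif-norm-planar}, and the vanishing area of $\A(1-\delta_\gamma,1)$. Nothing further is needed.
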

\begin{proof}
By Cauchy--Schwarz inequality, 
$$
\int_{\A(1-\delta, 1)}\lvert f(z)\rvert e^{-\gamma\lvert z\rvert^2}d A(z)\leq \left(\int_{\A(1-\delta, 1)} \lvert f(z)\rvert^2e^{-2\gamma\lvert z\rvert^2}dA(z)\right)^{1/2}\left(\int_{\A(1-\delta,1)}dA(z)\right)^{1/2}
$$
The first integral is uniformly bounded as $n\to\infty$ by Lemma~\ref{lem:unif-norm-planar}, 
and the area of annuli with radii $(1-\delta, 1)$ tends to zero as $\delta\to 0$.
\end{proof}
It turns out to be beneficial to introduce one more parameter in the functionals. For $\alpha>0$ we consider
$$
\rho_{\,\C,\gamma,\alpha}(f)=\int_{\D}\left(\lvert f(z)\rvert e^{-\alpha\gamma \lvert z\rvert^2}-1\right)^2dA(z).
$$
\begin{prop}\label{prop:parameter-planar}
For any sequence $\alpha_\gamma\to 1$ it holds that
$$
\liminf_{\gamma\to\infty}\inf_f\rho_{\,\C,\gamma,\alpha}(f)=\rC.
$$
\end{prop}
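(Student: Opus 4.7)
The proof rests on an elementary identity together with a one-sided scaling inequality. Inspecting the definitions, we see that $\rho_{\,\C,\gamma,\alpha}(f)=\rho_{\,\C,\alpha\gamma}(f)$, since both sides equal $\int_{\D}(|f(z)|e^{-\alpha\gamma|z|^2}-1)^2\,dA(z)$. Setting $h(\beta):=\inf_{f\in\Pol(\C)}\rho_{\,\C,\beta}(f)$, the proposition reduces to the equality $\liminf_{\gamma\to\infty}h(\alpha_\gamma\gamma)=\rC$. Since $\alpha_\gamma\gamma\to\infty$ as $\gamma\to\infty$ and $\rC=\liminf_{\beta\to\infty}h(\beta)$, the inequality $\liminf_{\gamma\to\infty}h(\alpha_\gamma\gamma)\ge\rC$ is automatic.

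For the reverse direction, I would first establish the scaling inequality
$$
h(\beta')\le\frac{\beta}{\beta'}\,h(\beta)\qquad\text{for all } \beta'<\beta.
$$
Given a polynomial $f$ and $\beta'<\beta$, the dilated function $\tilde f(z):=f\!\left(\sqrt{\beta'/\beta}\,z\right)$ is again a polynomial, and the change of variables $u=\sqrt{\beta'/\beta}\,z$ yields
$$
\rho_{\,\C,\beta'}(\tilde f)=\frac{\beta}{\beta'}\int_{\D(0,\sqrt{\beta'/\beta})}\bigl(|f(u)|e^{-\beta|u|^2}-1\bigr)^2\,dA(u).
$$
Because the integrand is nonnegative and $\D(0,\sqrt{\beta'/\beta})\subset\D$, enlarging the domain of integration can only increase the right-hand side, and so $\rho_{\,\C,\beta'}(\tilde f)\le(\beta/\beta')\,\rho_{\,\C,\beta}(f)$. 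Taking the infimum over $f$ gives the scaling inequality.

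With this tool in hand, fix $\epsilon\in(0,1)$ and, using the definition of $\rC$ as a liminf, select $\beta_n\to\infty$ with $h(\beta_n)\to\rC$. Set $\gamma_n:=(1-\epsilon)\beta_n$, so $\gamma_n\to\infty$; since $\alpha_{\gamma_n}\to 1<1/(1-\epsilon)$, we have $\alpha_{\gamma_n}(1-\epsilon)\le 1$, equivalently $\alpha_{\gamma_n}\gamma_n\le\beta_n$, for all sufficiently large $n$. Applying the scaling inequality to this pair yields
$$
h(\alpha_{\gamma_n}\gamma_n)\le\frac{\beta_n}{\alpha_{\gamma_n}\gamma_n}\,h(\beta_n)=\frac{h(\beta_n)}{\alpha_{\gamma_n}(1-\epsilon)}\;\longrightarrow\;\frac{\rC}{1-\epsilon},
$$
so that $\liminf_{\gamma\to\infty}h(\alpha_\gamma\gamma)\le\rC/(1-\epsilon)$. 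Since $\epsilon>0$ was arbitrary, this completes the proof. The main delicacy is the one-sidedness of the scaling inequality: an analogous upper bound on $h(\beta')$ in terms of $h(\beta)$ for $\beta'>\beta$ would demand a priori control of $f$ outside $\D$, which is unavailable here. Choosing $\gamma_n$ slightly \emph{below} $\beta_n$ precisely sidesteps this asymmetry and handles $\alpha_\gamma$ approaching $1$ from either side with the same argument.
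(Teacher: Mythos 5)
Your proof is correct. The lower bound $\liminf_{\gamma\to\infty}\inf_f\rho_{\,\C,\gamma,\alpha}(f)\geq\rC$ is obtained exactly as in the paper: the observation $\rho_{\,\C,\gamma,\alpha}(f)=\rho_{\,\C,\alpha\gamma}(f)$, combined with dilation invariance of $\Pol(\C)$ and the original definition \eqref{eqn:rhoC}, identifies $\inf_f\rho_{\,\C,\gamma,\alpha}(f)$ with the quantity defining $\rC$ at radius $R=(\alpha_\gamma\gamma)^{1/2}\to\infty$ (the paper phrases this as the change of variables $w=\alpha^{1/2}\gamma^{1/2}z$ and calls the whole proposition immediate). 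Where you genuinely go beyond the paper is the reverse inequality: since $\alpha_\gamma$ is an arbitrary function of $\gamma$ tending to $1$, the path $\gamma\mapsto\alpha_\gamma\gamma$ need not pass through the radii along which the liminf defining $\rC$ is attained, so the equality claimed in the statement is not literally automatic from the reparametrization; your scaling inequality $h(\beta')\leq(\beta/\beta')\,h(\beta)$ for $\beta'<\beta$ (restrict the dilated integral to the smaller disk and use nonnegativity of the integrand) is a clean quantitative near-monotonicity of $\beta\mapsto\inf_f\rho_{\,\C,\beta}(f)$ that closes this gap, and your choice $\gamma_n=(1-\epsilon)\beta_n$ correctly exploits its one-sidedness. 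Note that for the paper's purposes only the lower bound is ever used (in the proof of Theorem~\ref{thm:L2-non-conc-planar}), so the discrepancy is harmless there, but your argument is what actually justifies the equality sign in the proposition as stated.
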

\begin{proof}
This is immediate after the change of variables $w=\alpha^{1/2}\gamma^{1/2}z$, 
by the fact $\Pol(\C)$ is invariant under dilations and the original definition \eqref{eqn:rhoC} of $\rC$. 
\end{proof}

\subsection{An $L^2$-non-concentration estimate}
The point of this section is to control the growth of minimizers $f_\gamma$ near $\partial\D$. 
This is done effectively via the following theorem, which we will refer to as the planar $L^2$-non-concentration estimate.
\begin{thm}\label{thm:L2-non-conc-planar}
Let $f$ be a minimizer of $\rCg(f)$ and let $\delta =\delta_\gamma\to0$. Then
$$
\int_{\A(1-\delta, 1)}\lvert f(z)\rvert^2 e^{-2\gamma\lvert z\rvert^2}d A(z)=o(1),
$$
as $\gamma\to\infty$ along a subsequence $\Gamma=\{\gamma_k\}$ for which there exist polynomials $f_k$ such that $\rCg(f_k)\to\rC$.
\end{thm}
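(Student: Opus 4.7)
My approach is a dilation decomposition of $\rCg(f)$ into a bulk integral on $\D(1-\delta)$, which after rescaling reproduces the same functional at a slightly shifted parameter, plus a boundary integral on $\A(1-\delta,1)$. The bulk part is bounded below by the liminf defining $\rC$, while the total $\rCg(f_\gamma)$ collapses to $\rC$ along the subsequence $\Gamma$, pinching the boundary integral to $o(1)$. A one-line algebraic expansion then upgrades this to the required $L^2$-statement using Lemma~\ref{lemma:L1-non-concentration-planar}.

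\textbf{Decomposition.} Set $\tilde f(w) := f((1-\delta)w)$ and $\tilde\gamma := \gamma(1-\delta)^2$; since $\tilde f$ is again a polynomial, the change of variables $z = (1-\delta)w$ on $\D(1-\delta)$ yields
\[
\rCg(f) = (1-\delta)^2\, \rho_{\,\C,\tilde\gamma}(\tilde f) + \int_{\A(1-\delta,1)}\bigl(|f(z)|e^{-\gamma|z|^2} - 1\bigr)^2 dA(z).
\]
Because $\tilde f$ is a legitimate polynomial competitor for $\rho_{\,\C,\tilde\gamma}$ and $\tilde\gamma \to \infty$ along $\Gamma$, the liminf definition of $\rC$ gives $\rho_{\,\C,\tilde\gamma}(\tilde f) \geq \inf_g \rho_{\,\C,\tilde\gamma}(g) \geq \rC - o(1)$; combined with $(1-\delta)^2 = 1 - o(1)$, this lower-bounds the bulk summand by $\rC - o(1)$.

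\textbf{Closing the trap.} Along $\Gamma$, the minimizer $f_{\gamma_k}$ obeys, by minimality, $\rho_{\,\C,\gamma_k}(f_{\gamma_k}) \leq \rho_{\,\C,\gamma_k}(f_k) \to \rC$, while the same liminf bound forces $\rho_{\,\C,\gamma_k}(f_{\gamma_k}) \geq \rC - o(1)$; thus $\rCg(f_\gamma) \to \rC$ along $\Gamma$. Subtracting the bulk contribution from the decomposition,
\[
\int_{\A(1-\delta,1)}\bigl(|f_\gamma(z)|e^{-\gamma|z|^2} - 1\bigr)^2 dA(z) \leq \rCg(f_\gamma) - (\rC - o(1)) = o(1).
\]
Finally, I apply the pointwise identity $a^2 = (a-1)^2 + 2a - 1$ with $a = |f_\gamma|e^{-\gamma|z|^2}$ and integrate over $\A(1-\delta,1)$. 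The squared term is $o(1)$ by the preceding display, the linear term is $o(1)$ by Lemma~\ref{lemma:L1-non-concentration-planar}, and the area $|\A(1-\delta,1)| = 1 - (1-\delta)^2 = O(\delta)$ is $o(1)$; combining these gives the stated conclusion.

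I do not foresee a real obstacle. The only thing to verify is that the liminf bound $\inf_g \rho_{\,\C,\tilde\gamma}(g) \geq \rC - o(1)$ does apply along the shifted sequence $\tilde\gamma = \gamma(1-\delta)^2$, which is automatic since any sequence tending to infinity inherits the liminf lower bound. Beyond this, the argument leverages only the exact dilation symmetry of the Gaussian weight, the liminf-definition of $\rC$, and the $L^1$-non-concentration already in hand; in particular, no $\bar\partial$-estimate is required at this stage -- Theorem~\ref{thm:d-bar-planar} becomes essential only at the subsequent step $\rC^* \leq \rC$, where the present lemma serves as the crucial input.
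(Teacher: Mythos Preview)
Your proof is correct and follows the same dilation-and-squeeze argument as the paper: split $\rCg(f)$ into a bulk term on $\D(0,1-\delta)$ that rescales to the functional at parameter $\gamma(1-\delta)^2$ (hence bounded below by $\rC-o(1)$) plus an annular remainder forced to $o(1)$, then expand the square and invoke Lemma~\ref{lemma:L1-non-concentration-planar}. One small slip: a minimizer $f$ of $\rCg$ is merely holomorphic on $\D$, not necessarily a polynomial, so ``$\tilde f$ is again a polynomial'' is not literally correct---but this is harmless, since the infimum over polynomials agrees with that over holomorphic functions on $\D$ by the normal-families existence argument.
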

\begin{proof}
Fix such a sequence $\Gamma$, and let $f_\gamma$ denote a sequence of minimizers. 
Let $\alpha=(1-\delta)^2$ and $g_\gamma(z)=f_\gamma(\alpha^{1/2} z)$. 
Computing $\rho_{\,\C, \gamma, \alpha}(g)$ we find that
\begin{align*}
\rho_{\,\C, \gamma, \alpha}(g)
=&\int_{\D(0,1-\delta)}\lvert f(z)\rvert^2 e^{-2\gamma\lvert z\rvert^2}dA(z)-2\int_{\D(0,1-\delta)}\lvert f(z)\rvert e^{-\gamma\lvert z\rvert^2}d A(z)+1\\
=&\int_{\D}\lvert f(z)\rvert^2 e^{-2\gamma\lvert z\rvert^2}d A(z)-2\int_{\D}\lvert f(z)\rvert e^{-\gamma\lvert z\rvert^2}d A(z)+1\\
&-\int_{\A(1-\delta,1)}\lvert f(z)\rvert^2e^{-2\gamma\lvert z\rvert^2}dA(z)+2\int_{\A(1-\delta,1)}\lvert f(z)\rvert e^{-\gamma\lvert z\rvert^2}dA(z).
\end{align*}
From the $L^1$-non-concentration estimate (Lemma~\ref{lemma:L1-non-concentration-planar}), it follows that we may write
$$
\rho_{\,\C, \gamma, \alpha}(g)=\rho_{\,\C,\gamma}(f)-\int_{\A(1-\delta,1)}\lvert f(z)\rvert^2e^{-2\gamma\lvert z\rvert^2}dA+o(1).
$$
Since the remaining integral is positive;
$$
\liminf_{\gamma\to\infty, \gamma\in\Gamma}\rho_{\,\C, \gamma, \alpha}(g)=\rC-\limsup_{\gamma\to\infty, \gamma\in\Gamma}\int_{\A(1-\delta,1)}\lvert f(z)\rvert^2e^{-2\gamma\lvert z\rvert^2}dA.
$$
Proposition~\ref{prop:parameter-planar} tells us that $\liminf \rho_{\,\C, \gamma, \alpha}(g)\geq \rC$. 
If the above equation is to refrain from violating this, it must hold that
$$
\int_{\A(1-\delta,1)}\lvert f(z)\rvert^2e^{-2\gamma\lvert z\rvert^2}dA=o(1)
$$
which is the desired conclusion.
\end{proof}

\subsection{Proof of Theorem~\ref{thm:main-planar}}
Let $\Gamma$ be a sequence of numbers $\gamma\to\infty$, along which
$$
\lim_{k\to\infty}\inf_f\rho_{\,\C, \gamma_k}(f)=\rC.
$$
We will take all subsequent limits along this sequence.

Let $\delta \geq \gamma^{-1/2}$, so that by \eqref{eqn:cut-off} we have the bound
$$
\lvert \bar\partial \chi_{\delta,1}\rvert^2\leq C\gamma.
$$
For each $\gamma\in\Gamma$, let $f=f_\gamma$ be a minimizer of $\rCg(f)$, 
and let $u$ be a solution to $\bar\partial u=\bar\partial(\chi f)$, as in Theorem~\ref{thm:d-bar-planar}. 
Then
\begin{equation}\label{eqn:exterior-planar}
\int_{\C}\lvert u\rvert^2e^{-2\gamma \lvert z\rvert^2}dA\leq \frac{1}{2\gamma}\int_{\A(1-\delta, 1)}\lvert \bar\partial \chi\rvert^2\lvert f\rvert^2e^{-2\gamma\lvert z\rvert^2}dA\leq C\int_{\A(1-\delta,1)}\lvert f\rvert^2e^{-2\gamma\lvert z\rvert^2}dA=o(1)
\end{equation}
where the asymptotics follows from the $L^2$-non-concentration estimate in Theorem~\ref{thm:L2-non-conc-planar}.

Let $\nu=\chi f-u$. Then $\nu$ is holomorphic, and since $\chi$ has compact support 
and $\lvert u(z)\rvert=O(\lvert z\rvert^{n-1})$, it follows by Liouville's theorem that $\nu\in\Pol_n(\C)$.
We calculate the functional $\rCg^*(\nu)$ as
\begin{equation}\label{eqn:main-planar}
\rCg^*(\nu)=\int_{\C\setminus\D}\lvert u\rvert^2e^{-2\gamma\lvert z\rvert^2}dA+\int_{\D}\lvert \nu\rvert^2e^{-2\gamma\lvert z\rvert^2}dA-2\int_\D\lvert \nu\rvert e^{-\gamma\lvert z\rvert^2}dA+1
\end{equation}
The first term in \eqref{eqn:main-planar} is $o(1)$ by \eqref{eqn:exterior-planar}. 

We turn to the $L^p$-norms of $\nu e^{-\gamma\lvert z\rvert^2}$. 
By the $L^1$-non-concentration estimates, we have that
$$
\int_{\D}\lvert \nu\rvert e^{-\gamma\lvert z\rvert^2}dA=\int_{\D}\lvert f\rvert e^{-\gamma\lvert z\rvert^2}dA+\int_{\D}(\lvert \nu\rvert -\lvert f\rvert) e^{-\gamma\lvert z\rvert^2}dA
$$
and
$$
\left\lvert \int_{\D}(\lvert \nu\rvert -\lvert f\rvert) e^{-\gamma\lvert z\rvert^2}dA\right\rvert \leq 2\int_{\A(1-\delta,1)}\lvert f\rvert e^{-\gamma\lvert z\rvert^2}dA+\int_\D \lvert u\rvert e^{-\gamma\lvert z\rvert^2}dA=o(1),
$$
where the last assertion follows from the $\bar\partial$-estimate \eqref{eqn:exterior-planar} 
and the $L^1$-non-concentration estimate. Thus 
$$
\int_\D \lvert \nu\rvert e^{-\gamma\lvert z\rvert^2}d A=\int_\D \lvert f\rvert e^{-\gamma\lvert z\rvert^2}d A+o(1).
$$
Turning to the $L^2$-norms, 
$$
\int_\D\lvert \nu\rvert^2 e^{-2\gamma\lvert z\rvert^2}dA=\int_{\D}\lvert \chi f\rvert^2e^{-2\gamma\lvert z\rvert^2}dA +\int_{\D}(\lvert u\rvert^2-2\Re(\chi f\overline{u}))e^{-2\gamma\lvert z\rvert^2}dA.
$$
The latter integral is $o(1)$ by the $\bar\partial$-estimate \eqref{eqn:exterior-planar} 
and an application of the Cauchy--Schwarz inequality. 
The former satisfies
$$
\int_{\D}\lvert \chi  f\rvert^2e^{-2\gamma\lvert z\rvert^2}dA=\int_{\D}\lvert f\rvert^2e^{-2\gamma\lvert z\rvert^2}dA+o(1),
$$
in light of Theorem~\ref{thm:L2-non-conc-planar}. It follows from this that
$$
\rCg^*(\nu)=\rCg(f)+o(1), \qquad \gamma\to\infty, \gamma\in\Gamma.
$$
Since $\nu$ are admissible polynomials, this implies that
$$
\rC^*\leq \liminf_{\gamma\to\infty}\rCg^*(\nu)\leq\lim_{\gamma\to\infty, \gamma\in\Gamma}\rCg(f)=\rC.
$$
The reversed inequality is known to hold, so it follows that $\rC=\rC^*$.
\qed

\section{The Hyperbolic Case: Proof of Theorem~\ref{thm:main-hyperbolic}}
\subsection{Application of the $\bar\partial$-estimate} 
We begin by applying Theorem~\ref{thm:AHMBerezin} in our setting to get the following theorem, 
which gives the crucial $L^2$-control of solutions to $\bar\partial u=\bar\partial(\chi  f)$ on the entire disk. 
We let $0<r<1$ and denote by $\chi$ a cut-off function $\chi_{\delta,r}$, as in \eqref{eqn:cut-off}.
\begin{thm}\label{thm:d-bar}
Let $0<r<1$ and let $f$ be a bounded holomorphic function on $\D$. 
There exists a solution $u=u_{0,r}$ to $\bar\partial u=\bar\partial(\chi  f)$ that enjoys the estimate
$$
\int_{\D}\lvert u(z) \rvert^2(1-\lvert z\rvert^2)dA(z)\leq \int_{\A((1-\delta)r,r)}\lvert \bar\partial \chi(z)\rvert^2\lvert f(z)\rvert^2(1-\lvert z\rvert^2)^3dA(z).
$$
Moreover,  
$$
\lvert u(z)\rvert=O\left(\lvert z\rvert^{n-1}\right), \qquad z\to\infty
$$
where $n=n(r)=\lceil r^2(1-r^2)^{-1}\rceil$.
\end{thm}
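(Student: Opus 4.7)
The plan is to apply Theorem~\ref{thm:AHMBerezin} (together with Remark~(a) following it) with the weight
\[
\phi(z) = \log\frac{1}{1-|z|^2}, \qquad z \in \D,
\]
extended by $+\infty$ on $\C\setminus\D$, and with compact set $\mathscr{T} = \overline{\D(0,r)}$. A direct computation gives $\Delta\phi = (1-|z|^2)^{-2}$ on $\D$, so $e^{-\phi} = 1-|z|^2$ there, and the right-hand side of the AHM $\bar\partial$-estimate becomes exactly the quantity claimed:
\[
\int_\mathscr{T}|f\bar\partial\chi|^2\frac{e^{-\phi}}{\Delta\phi}\,dA = \int_{\A((1-\delta)r,r)}|f|^2|\bar\partial\chi|^2(1-|z|^2)^3\,dA.
\]
Since $\bar\partial(\chi f) = f\bar\partial\chi$ is supported in $\A((1-\delta)r,r) \subset \mathscr{T}$ and $f$ is bounded there by hypothesis, the $L^\infty$-requirement on the data is satisfied.

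The main step is exhibiting an admissible majorant $\widehat\phi$. Mimicking the planar construction of Subsection~3.1, I would take $\widehat\phi$ equal to $\phi$ on $\mathscr{T}$ and extend it radially as a harmonic function of $|z|$ outside. Setting $\tau := r^2/(1-r^2)$, this gives
\[
\widehat\phi(z) =
\begin{cases}
\log\dfrac{1}{1-|z|^2}, & |z|\leq r,\\[4pt]
\tau\log|z|^2 - \tau\log r^2 - \log(1-r^2), & |z| > r,
\end{cases}
\]
where the constant in the second line ensures continuity at $|z|=r$, and the exponent $\tau$ is dictated by matching radial derivatives, since $\phi'(|z|) = 2|z|/(1-|z|^2)$ evaluates to $2r/(1-r^2) = 2\tau/r$ at $|z|=r$. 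This choice gives $\mathscr{C}^{1,1}$-regularity of $\widehat\phi$ across $\partial\mathscr{T}$.

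The remaining hypotheses of Theorem~\ref{thm:AHMBerezin} follow by inspection. The bound $\widehat\phi \leq \phi$ on $\C$ is trivial off $\D$ (where $\phi = +\infty$), and on $\A(r,1)$ follows because, starting from the common value at $|z|=r$, the radial derivative of $\phi$ keeps increasing while that of $\widehat\phi$ decreases, so $\phi$ pulls away above $\widehat\phi$. We have $\Delta\widehat\phi = (1-|z|^2)^{-2} > 0$ on $\mathscr{T}$ and $\Delta\widehat\phi = 0$ off $\mathscr{T}$, so $\Delta\widehat\phi \geq 0$ globally; and $\widehat\phi(z) = \tau\log|z|^2 + O(1)$ as $z\to\infty$ by construction. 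The theorem therefore applies for every integer $n \geq \tau$, and choosing $n = n(r) = \lceil r^2/(1-r^2)\rceil$ yields an $L^2_{n,\widehat\phi}$-minimal solution $u$ to $\bar\partial u = f\bar\partial\chi$ satisfying
\[
\int_\D|u|^2(1-|z|^2)\,dA \leq \int_\C|u|^2 e^{-\widehat\phi}\,dA \leq \int_{\A((1-\delta)r,r)}|f|^2|\bar\partial\chi|^2(1-|z|^2)^3\,dA,
\]
where the first inequality uses $e^{-\widehat\phi} = 1-|z|^2$ on $\D$, and the polynomial growth $|u(z)| = O(|z|^{n-1})$ is built into the definition of $L^2_{n,\widehat\phi}$.

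There is no genuinely hard step: this is the hyperbolic twin of the reversed obstacle problem already solved in Subsection~3.1, with the quadratic weight $2\gamma|z|^2$ replaced by the hyperbolic weight $\log(1-|z|^2)^{-1}$ and the free boundary relocated from $\partial\D$ to $\partial\D(0,r)$. The only point that requires genuine verification is the matching of radial derivatives at $|z|=r$; this identifies the correct exponent $\tau = r^2/(1-r^2)$ and thereby the degree $n(r) = \lceil r^2/(1-r^2)\rceil$ appearing in the theorem.
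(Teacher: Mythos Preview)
Your proof is correct and follows essentially the same route as the paper's: the same choice of $\phi$, $\mathscr{T}=\overline{\D(0,r)}$, and the same harmonic extension $\widehat\phi$ with exponent $\tau=r^2/(1-r^2)$, followed by a direct application of Theorem~\ref{thm:AHMBerezin}. One small slip in your justification: you write ``$e^{-\widehat\phi}=1-|z|^2$ on $\D$,'' but this equality holds only on $\overline{\D(0,r)}$; on $\A(r,1)$ you have $e^{-\widehat\phi}\geq 1-|z|^2$ (since $\widehat\phi\leq\phi$ there), which is what actually gives the first inequality in your final display---alternatively, the paper simply uses $e^{-\phi}$ on the left (which equals $1-|z|^2$ on $\D$ and vanishes outside) and avoids this issue.
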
 
\begin{proof}
Let $\phi(z)=\log{1/(1-\lvert z\rvert^2)}$ for $z\in\D$, and extend it to the entire plane by defining $\phi(z)=+\infty$ for $z\in\overline{\D^e}$.
The compact set $\mathscr{T}$ is taken to be $\overline{\D(0,r)}$, for a fixed $0<r<1$. 

Define the function $\widehat\phi$ as the minimal subharmonic function of class $\mathscr{C}^{1,1}$, that agrees with $\phi$ on $\D(0,r)$. 
Since $\phi$ is radial, the function $\widehat{\phi}$ is readily found; indeed since 
$$
\phi_{\vert\partial\D(0,r)}=\log \frac{1}{1-r^2},\qquad \partial_n\phi_{\vert \partial\D(0,r)}=\frac{2r}{1-r^2},
$$
one easily checks that
$$
\widehat{\phi}(z):=\frac{r^2}{1-r^2}\log\frac{\lvert z\rvert^2}{r^2}+\log\frac{1}{1-r^2}
$$
is a candidate, in that it agrees with $\phi$ in the right sense. 
Since it is harmonic in the exterior disk $\D(0,r)^e$, 
it follows by the maximum principle for subharmonic functions that it is the correct choice.
With this pair $(\phi,\widehat\phi)$, all assumptions of Theorem~\ref{thm:AHMBerezin} are satisfied.

Applying the theorem, we obtain a solution $u\in L^2_{n,\phi}$ for which the estimate
$$
\int_{\C}\lvert u\rvert^2e^{-\phi}dA\leq \int_{\mathscr{T}}\lvert \bar\partial(\chi f)\rvert^2\frac{e^{-\phi}}{\Delta\widehat\phi},
$$
holds true. Since $\Delta\widehat{\phi}=(1-\lvert z\rvert^2)^{-2}$ on $\mathscr{T}=\D(0,r)$, 
and since $f$ is holomorphic, it follows that
$$
\int_{\D}\lvert u(z)\rvert^2(1-\lvert z\rvert^2)dA\leq\int_{\A((1-\delta)r,r)}\lvert \bar\partial\chi \rvert^2\lvert f\rvert^2(1-\lvert z\rvert^2)^3dA,
$$
which completes the proof.
\end{proof}

\subsection{Non-concentration estimates for minimizers} 
Just as in the planar case, it is clear that for each $0 <r<1$ there exists a holomorphic function $f_0$ which 
attains the value $\inf_f\rHr(f)$, taken over all polynomials. 
The estimates of $L^2_{n,\phi}$-minimal solutions to $\bar\partial u=\bar\partial(\chi f)$, 
where $f=f_r$ is such a minimizer, control the norm of $u$ in terms of the behaviour of $f$ near $\partial\D$, as $r\to 1^-$. 
In this section we aim to understand this behaviour of $f$ better. We begin with the following simple variational identity.
\begin{lemma}\label{lem:unif-norm}
Let $f=f_r$ be a minimizer of $\rho_{\H,r}(f)$. Let
$$
\ell_{k,r}:=\frac{1}{\log\frac{1}{1-r^2}}\int_{\D(0,r)}\lvert f(z)\rvert^k (1-\lvert z\rvert)^{k-1}dA(z),\quad k\in\{1,2\}, \quad 0<r<1.
$$
Then $\ell_{1,r}=\ell_{2,r}$, and both sequences are bounded.
\end{lemma}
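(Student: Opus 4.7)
The plan is to mimic the variational argument used for Lemma~\ref{lem:unif-norm-planar}. I would define $V(\alpha) := \rHr(\alpha f)$ for $\alpha > 0$. Since $\alpha f$ is a polynomial whenever $f$ is, minimality of $f=f_r$ forces $V'(1) = 0$. Expanding the square in the definition of $\rHr$ gives
$$
V(\alpha)\log\tfrac{1}{1-r^2} = \alpha^2\!\int_{\D(0,r)}\!|f(z)|^2(1-|z|^2)\,dA(z) - 2\alpha\!\int_{\D(0,r)}\!|f(z)|\,dA(z) + \log\tfrac{1}{1-r^2},
$$
since $\int_{\D(0,r)}(1-|z|^2)^{-1}dA = \log\frac{1}{1-r^2}$. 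Differentiating in $\alpha$ and evaluating at $\alpha=1$ yields the variational identity
$$
\int_{\D(0,r)}|f(z)|^2(1-|z|^2)\,dA(z) = \int_{\D(0,r)}|f(z)|\,dA(z),
$$
which, upon dividing through by $\log\frac{1}{1-r^2}$, is precisely the asserted equality $\ell_{2,r} = \ell_{1,r}$.

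For the boundedness, I would substitute this variational identity back into the expansion of $V(1)$ above, which collapses to the clean relation
$$
\rHr(f_r) = 1 - \ell_{1,r}.
$$
Since $\rHr(f_r)$ is manifestly non-negative as an integral of a square, and since the zero polynomial is admissible in the defining infimum and gives $\rHr(0) = 1$, we obtain $0 \leq \rHr(f_r) \leq 1$, whence $0 \leq \ell_{1,r} \leq 1$ uniformly in $r \in (0,1)$. Invoking $\ell_{2,r} = \ell_{1,r}$ from the first part, both sequences are uniformly bounded, as required.

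This is a formal calculation with no substantive obstacle. The only conceptual input is that the scaling $f \mapsto \alpha f$ preserves the class of admissible polynomials and therefore generates a one-parameter family against which the minimizer $f_r$ must be stationary; everything else is routine expansion of the square and comparison with the trivial competitor $f \equiv 0$.
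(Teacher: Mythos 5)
Your proof is correct and is essentially identical to the paper's argument: the same scaling variation $V(\alpha)=\rHr(\alpha f)$ with $V'(1)=0$ giving $\ell_{1,r}=\ell_{2,r}$, followed by the identity $\rHr(f)=1-\ell_{1,r}$ and the trivial bounds $0\le\rHr(f)\le 1$ (via the competitor $f\equiv 0$) to get boundedness. You also implicitly use the weight $(1-\lvert z\rvert^2)^{k-1}$, which is what the expansion of $\rHr$ actually produces and matches the paper's intent (the $(1-\lvert z\rvert)^{k-1}$ in the statement is evidently a typo).
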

\begin{proof}
Consider the variation
$$
V(\alpha)=\rho_{\,\H, r}(\alpha f),\qquad 0<\alpha<\infty.
$$
Since $\alpha f$ is admissible for any $\alpha$, it follows that if $f$ is a minimizer then $V'(1)=0$.
By expanding the square, one observes that this says that $\ell_{2,r}=\ell_{1,r}$. Calculating $\rho_{\,\H,r}(f)$ using this equality, we see that
$$
\rho_{\,\H,r}(f)=1-\ell_{1,r}.
$$
Since $0<\rho_{\,\H,r}(f)\leq 1$ and since the integrals are positive, it follows that $\ell_{1,r}$ is uniformly bounded, and thus the same holds for $\ell_{2,r}$.
\end{proof}
In the following, we will consider limiting proceedures as $r\to 1^-$. 
Many objects will depend on $r$, and sometimes on subsequences $\mathcal{R}=\{r_k\}_{k\geq 1}$. 
In order not to obscure the notation, we will often suppress indices when no confusion should occur. 
\begin{lemma}\label{lem:L1-non-concentration}
Let $\delta$ be a sequence tending to zero as $r\to 1^-$ and denote by $f$ a minimizer of $\rHr(f)$. We have that
$$
\frac{1}{\log\frac{1}{1-r^2}}\int_{\A(r(1-\delta),r)}\lvert f(z)\rvert dA(z)=O\left(\left(1-\frac{\log\frac{1}{1-r^2(1-\delta)^2}}{\log\frac{1}{1-r^2}}\right)^{1/2}\right).
$$
In particular, if $\delta = (1-r)$, the $O$-expression is $o(1)$.
\end{lemma}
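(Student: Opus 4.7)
The plan is to mimic the proof of the planar $L^1$-non-concentration estimate (Lemma~\ref{lemma:L1-non-concentration-planar}), with the hyperbolic weight $(1-|z|^2)^{-1}$ in place of the flat area measure. The key decomposition is the Cauchy--Schwarz splitting
$$
|f(z)|\,dA(z) = \bigl(|f(z)|(1-|z|^2)^{1/2}\bigr)\cdot (1-|z|^2)^{-1/2}\,dA(z),
$$
which suggests controlling one factor by the variational identity of Lemma~\ref{lem:unif-norm} and the other by an explicit computation of the hyperbolic area.

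First, I would apply Cauchy--Schwarz to obtain
$$
\int_{\A(r(1-\delta),r)}|f|\,dA \leq \left(\int_{\A(r(1-\delta),r)}|f|^2(1-|z|^2)\,dA\right)^{1/2}\left(\int_{\A(r(1-\delta),r)}\frac{dA(z)}{1-|z|^2}\right)^{1/2}.
$$
For the first factor, I would use the trivial bound $1-|z|^2 \le 2(1-|z|)$ together with Lemma~\ref{lem:unif-norm}, which gives $\int_{\D(0,r)}|f|^2(1-|z|^2)\,dA \leq 2\ell_{2,r}\log\frac{1}{1-r^2} = O\bigl(\log\frac{1}{1-r^2}\bigr)$. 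For the second, I would evaluate the integral directly in polar coordinates:
$$
\int_{\A(r(1-\delta),r)}\frac{dA(z)}{1-|z|^2} = \log\frac{1}{1-r^2} - \log\frac{1}{1-r^2(1-\delta)^2} = \log\frac{1}{1-r^2}\left(1-\frac{\log\frac{1}{1-r^2(1-\delta)^2}}{\log\frac{1}{1-r^2}}\right).
$$
Combining the two factors and dividing by $\log\frac{1}{1-r^2}$ yields precisely the stated $O$-estimate.

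For the ``In particular'' clause, the choice $\delta = 1-r$ gives $r(1-\delta) = r^2$, so one must check that $1 - \frac{\log(1-r^4)^{-1}}{\log(1-r^2)^{-1}} = o(1)$ as $r\to 1^-$. Writing $s = 1-r^2$, we have $1-r^4 = s(2-s)$, so the ratio equals $1 + \frac{\log(2-s)^{-1}}{\log s^{-1}}$, and the second term tends to $0$ since the numerator is bounded while the denominator blows up. Hence the $O$-expression is $o(1)$, as claimed.

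The proof is essentially a Cauchy--Schwarz sandwich together with a direct computation; there is no serious obstacle, the only points to watch are that the hyperbolic weight enters the integrability of $f^2$ via $(1-|z|^2)$ rather than the original $(1-|z|^2)^{-1}$, which is exactly what the splitting corrects, and that Lemma~\ref{lem:unif-norm} produces a bound of order $\log\frac{1}{1-r^2}$ which must be divided out at the end.
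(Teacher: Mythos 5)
Your proposal is correct and follows essentially the same route as the paper: the same Cauchy--Schwarz splitting, the boundedness of $\int_{\D(0,r)}\lvert f\rvert^2(1-\lvert z\rvert^2)\,dA$ via Lemma~\ref{lem:unif-norm}, the exact polar-coordinate evaluation of $\int_{\A(r(1-\delta),r)}\frac{dA(z)}{1-\lvert z\rvert^2}$, and a direct check of the ratio for $\delta=1-r$. Your extra step $1-\lvert z\rvert^2\le 2(1-\lvert z\rvert)$ merely reconciles the weight in the literal statement of Lemma~\ref{lem:unif-norm} with the one needed here (the paper uses the $(1-\lvert z\rvert^2)$ form directly), and your substitution $s=1-r^2$ in the final step is a slightly cleaner version of the paper's computation.
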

\begin{proof}
Cauchy--Schwarz inequality gives that
$$
\int\limits_{\A(r(1-\delta),r)}\lvert f(z)\rvert dA(z)\leq \left(\,\int\limits_{\A(r(1-\delta),r)}\lvert f(z)\rvert^2(1-\lvert z\rvert^2)dA(z)\right)^{1/2}\left(\,\int\limits_{\A(r(1-\delta),r)}\frac{dA(z)}{1-\lvert z\rvert^2}\right)^{1/2}.
$$
Using that $(\log\frac{1}{1-r^2})^{-1}\int_{\D(0,r)}\lvert f\rvert^2(1-\lvert z\rvert^2)dA$ is bounded, 
we may estimate further
$$
\frac{1}{\log\frac{1}{1-r^2}}\int\limits_{\A(r(1-\delta),r)}\lvert f(z)\rvert dA(z)\leq C\left(\frac{\int_{\A(r(1-\delta),r)}\frac{dA}{1-\lvert z\rvert^2}}{\log\frac{1}{1-r^2}}\right)^{1/2}.
$$
Calculating the integrals, we find that
$$
\frac{\int_{\A(r(1-\delta),r)}\frac{dA}{1-\lvert z\rvert^2}}{\log\frac{1}{1-r^2}}=\frac{\log\frac{1}{1-r^2}-\log\frac{1}{1-r^2(1-\delta)^2}}{\log\frac{1}{1-r^2}}=1-\frac{\log\frac{1}{1-r^2(1-\delta)^2}}{\log\frac{1}{1-r^2}},
$$
which proves the first assertion. 

Next, if $\delta=(1-r)$, then we note that
$$
1-\frac{\log(1-r^2(1-\delta)^2)}{\log(1-r^2)}=1-\frac{\log(1-r+r(1-r))+O(1)}{\log(1-r)+O(1)}=1-\frac{\log(1-r)+O(1)}{\log(1-r)+O(1)}=o(1),
$$ 
which completes the proof.
\end{proof}
Consider the functional $\rho_{\,\H,r,\alpha}(f)$, defined for $\alpha<1$ by
$$
\rho_{\,\H,r,\alpha}(f)=\frac{\alpha^2}{\log\frac{1}{1-r^2}}\int_{\D(0,r)}\left((1-\lvert \alpha z\rvert^2)\lvert f(z)\rvert-1\right)^2\frac{dA(z)}{1-\lvert \alpha z\rvert^2},
$$
We have the following lemma, allowing for the freedom of an extra parameter.
\begin{prop}\label{prop:parameter-freedom} 
Let $\alpha_r$ be a sequence of numbers $\alpha_r\to1^{-}$, such that $\alpha_r\geq r^{k}$ for some $k$. Then
$$
\liminf_{r\to 1^{-}}\inf_f\rho_{\,\H, r,\alpha}(f)\geq \rH.
$$
\end{prop}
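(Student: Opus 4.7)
The plan is to reduce $\rho_{\,\H, r, \alpha}$ to $\rho_{\,\H, \alpha r}$ by a change of variables, and then apply the defining liminf property of $\rH$ at the rescaled radius $s = \alpha r$. The hypothesis $\alpha_r \geq r^k$ enters only to ensure that the two relevant normalizing logarithms are asymptotically of the same size.

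The first step is to substitute $w = \alpha z$ in the integral defining $\rho_{\,\H, r, \alpha}(f)$. Setting $g(w) := f(w/\alpha)$, which is again a polynomial of the same degree as $f$, the Jacobian factor $\alpha^2$ cancels the $\alpha^2$ in front of the integral, the term $(1-|\alpha z|^2)$ becomes $(1-|w|^2)$, and $\D(0, r)$ is carried to $\D(0, \alpha r)$. Comparing with the definition of $\rho_{\,\H, \alpha r}(g)$, this gives the identity
$$
\rho_{\,\H, r, \alpha}(f) = \frac{\log\frac{1}{1 - (\alpha r)^2}}{\log\frac{1}{1 - r^2}} \cdot \rho_{\,\H, \alpha r}(g),
$$
and, since $f \mapsto g$ is a bijection of $\Pol(\C)$, the same identity persists for the infima over $f$ and $g$ respectively.

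Next I would handle the ratio of logarithms. From $r^k \leq \alpha_r \leq 1$ it follows that $1 - r^2 \leq 1 - (\alpha_r r)^2 \leq 1 - r^{2k+2}$, and since $\log\frac{1}{1-r^m} = -\log(1-r) + O(1)$ as $r \to 1^-$ for any fixed positive integer $m$, both bounding logarithms are of the form $-\log(1-r) + O(1)$, which forces the ratio to tend to $1$. A byproduct is that $\alpha_r r \to 1^-$. Combined with the identity above, the problem reduces to showing that $\liminf_{r \to 1^-} \inf_g \rho_{\,\H, \alpha_r r}(g) \geq \rH$, which is immediate from the definition of $\rH$ applied along the sequence $s_r := \alpha_r r$: for any $\epsilon > 0$, eventually $s_r$ lies in a neighborhood of $1$ on which $\inf_g \rho_{\,\H, s}(g) \geq \rH - \epsilon$.

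I do not expect any essential obstacle. The entire argument is a conformal rescaling combined with a liminf manipulation through a factor that tends to $1$. The one slightly delicate point is that the hypothesis $\alpha_r \geq r^k$ is used precisely to force the logarithm ratio to tend to $1$: without a polynomial lower bound the ratio could converge to a genuinely smaller constant, as happens for instance with $\alpha_r = 1 - (1-r)^{1/2}$, which would in principle weaken the conclusion.
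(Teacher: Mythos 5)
Your argument is correct and is essentially the paper's own proof: the same change of variables $w=\alpha z$ reducing $\rho_{\,\H,r,\alpha}(f)$ to $\frac{\log\frac{1}{1-\alpha^2r^2}}{\log\frac{1}{1-r^2}}\,\rho_{\,\H,\alpha r}\bigl(f(\cdot/\alpha)\bigr)$, the same use of $\alpha_r\geq r^k$ to force the logarithm ratio to tend to $1$, and the same admissibility of $f(z/\alpha)$ together with the liminf definition of $\rH$ along $s_r=\alpha_r r\to 1^-$. Your closing remark correctly identifies why the polynomial lower bound on $\alpha_r$ is needed.
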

The condition $\alpha\geq r^{k}$ is by no means meant to be sharp. 
It illustrates some flexibility compared to the restrictions on $\delta$, 
while it is clearily compatible with $\alpha=r$, which corresponds to the choice $\delta=1-r$ which will be made shortly.
\begin{proof}
We have that
$$
\rho_{\,\H,r, \alpha}(f)=\frac{\alpha^2}{\log\frac{1}{1-r^2}}\int_{\D(0,r)}\left((1-\lvert \alpha z\rvert^2)\lvert f(z)\rvert-1\right)\frac{dA(z)}{1-\lvert \alpha z\rvert^2}=\frac{\log\frac{1}{1-\alpha^2r^2}}{\log\frac{1}{1-r^2}}\rho_{\,\H,\alpha r,1}\left(f\left(\frac{z}{\alpha}\right)\right).
$$
If $\alpha\geq r^{k}$, it follows that
$$
1\geq \frac{\log\frac{1}{1-\alpha^2r^2}}{\log\frac{1}{1-r^2}}\geq\frac{\log \frac{1}{1-r^{k+1}}+O(1)}{\log\frac{1}{1-r}+O(1)}=1+o(1).
$$
Since $\alpha r<1$ and $f_{\alpha^{-1}}(z)=f(z/\alpha)$ is admissible, the result follows.
\end{proof}
The following theorem is the key ingredient to the proof of our main result, and will be referred to as the hyperbolic $L^2$-non-concentration estimate.
\begin{thm}\label{thm:L2-non-conentration}
Let $f$ be a minimizer of $\rHr(f)$, 
and let $\delta=(1-r)$. Then 
$$
\frac{1}{\log\frac{1}{1-r^2}}\int_{\A(r(1-\delta),r)}\lvert f(z)\rvert^2(1-\lvert z\rvert^2)dA(z)=o(1)
$$
as $r\to 1^-$ along a sequence $\mathcal{R}=\{r_k\}$ for which $\inf_f\rho_{\,\H, r_{k}}(f)\to\rH$.
\end{thm}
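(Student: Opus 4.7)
The argument closely parallels the planar counterpart Theorem~\ref{thm:L2-non-conc-planar}. Given a minimizer $f = f_r$ of $\rHr$, we compare it against the rescaled function $g(z) = f(\alpha z)$, where $\alpha = 1-\delta = r$ is chosen so that, under the substitution $u = \alpha z$, the effective integration domain for $g$ collapses from $\D(0,r)$ onto $\D(0,\alpha r) = \D(0, r(1-\delta))$. Explicitly, writing $L_s = \log\frac{1}{1-s^2}$, the factor $\alpha^2$ in the definition of $\rho_{\,\H,r,\alpha}$ cancels the Jacobian of the substitution, and we obtain
\begin{equation*}
\rho_{\,\H,r,\alpha}(g) \;=\; \frac{1}{L_r}\int_{\D(0,\alpha r)} \bigl((1-|u|^2)|f(u)|-1\bigr)^2 \frac{dA(u)}{1-|u|^2}.
\end{equation*}

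Expanding the squares in $\rho_{\,\H,r,\alpha}(g)$ and $\rHr(f)$ and subtracting, the difference decomposes over the annulus $\A(\alpha r, r) = \A(r(1-\delta), r)$ as
\begin{equation*}
\rHr(f) - \rho_{\,\H,r,\alpha}(g) \;=\; \frac{1}{L_r}\!\int_{\A(\alpha r,r)}\! |f(z)|^2(1-|z|^2)\,dA(z) \;-\; \frac{2}{L_r}\!\int_{\A(\alpha r,r)}\! |f(z)|\,dA(z) \;+\; \frac{L_r - L_{\alpha r}}{L_r}.
\end{equation*}
With $\delta = 1-r$, the $L^1$-non-concentration Lemma~\ref{lem:L1-non-concentration} renders the middle cross-term $o(1)$, while $L_r - L_{\alpha r} = \log(1+r^2) = O(1)$ and $L_r \to \infty$ together render the final boundary term $o(1)$. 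Rearranging,
\begin{equation*}
\frac{1}{L_r}\int_{\A(r(1-\delta),r)} |f(z)|^2(1-|z|^2)\,dA(z) \;=\; \rHr(f) - \rho_{\,\H,r,\alpha}(g) + o(1).
\end{equation*}

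Along $\mathcal{R}$, the minimality of $f$ forces $\rHr(f) \to \rH$, while Proposition~\ref{prop:parameter-freedom} (applied with the parameter sequence $\alpha_r = r$, which trivially satisfies $\alpha_r \geq r^1$) together with admissibility of $g$ yields $\liminf_{r \in \mathcal{R}} \rho_{\,\H,r,\alpha}(g) \geq \rH$. Together with non-negativity of the annular integral this pins down
\begin{equation*}
\limsup_{r\in\mathcal{R}} \frac{1}{L_r}\int_{\A(r(1-\delta),r)} |f(z)|^2(1-|z|^2)\,dA(z) \;\leq\; \rH - \rH \;=\; 0,
\end{equation*}
which is precisely the desired $o(1)$ bound. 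The main subtlety is the book-keeping of error terms: the choice $\delta = 1-r$ is exactly the critical threshold at which Lemma~\ref{lem:L1-non-concentration} still delivers the strengthened $o(1)$-decay (rather than merely an $O$-bound) needed to suppress the $L^1$ cross-term, and simultaneously the regime in which $L_{\alpha r}/L_r \to 1$ so that Proposition~\ref{prop:parameter-freedom} legitimizes the comparison between $\rho_{\,\H,r,\alpha}$ and $\rHr$.
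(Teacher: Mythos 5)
Your proof is correct and takes essentially the same route as the paper: you compare the minimizer $f$ with the dilate $g(z)=f((1-\delta)z)$, use Lemma~\ref{lem:L1-non-concentration} to kill the $L^1$ cross-term and Proposition~\ref{prop:parameter-freedom} (with $\alpha_r=r$) to bound $\rho_{\,\H,r,\alpha}(g)$ from below by $\rH$, and then squeeze the annular $L^2$ integral between $\rHr(f)\to\rH$ and that lower bound. The only cosmetic difference is that you explicitly track the hyperbolic area of the annulus, $(L_r-L_{\alpha r})/L_r=\log(1+r^2)/L_r=o(1)$, which the paper's displayed expansion leaves implicit; this is correct and slightly more careful bookkeeping, not a different argument.
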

\begin{proof}
Let $\mathcal{R}$ be a sequence of indices along which $\inf_f\rHr(f)\to\rH$. Denote by $c_0$ the number
$$
c_0=\limsup_{r\to1^-, r\in\mathcal{R}}\frac{1}{\log\frac{1}{1-r^2}}\int_{\A(r(1-\delta),r)}\lvert f(z)\rvert^2(1-\lvert z\rvert^2)dA(z).
$$
Let $\alpha=1-\delta$. Consider the functions $g(z)=f(\alpha z)$, and the functionals $\rho_{\,\H, r, \alpha}(g)$. These satisfy
\begin{align}
\rho_{\,\H, r, \alpha}(g) &=\frac{\alpha^2}{\log\frac{1}{1-r^2}}\int_{\D(0,r)}\left((1-\lvert \alpha z\rvert^2)\lvert f(\alpha z)\rvert - 1\right)^2\frac{dA(z)}{1-\lvert \alpha z\rvert^2}  \\
& =\frac{1}{\log\frac{1}{1-r^2}}\int_{\D(0,(1-\delta)r)}\left((1-\lvert z\rvert^2)\lvert f(z)\rvert - 1\right)^2\frac{dA(z)}{1-\lvert z\rvert^2}.
\end{align}\label{eqn:L2-est-1}
By Proposition~\ref{prop:parameter-freedom} it follows that $\liminf_{r\to 1^-, r\in\mathcal{R}}\rho_{\,\H, r, \alpha}(g)\geq \rH$. However, computing the functionals by expanding the squares, we see that
\begin{align*}
\rho_{\,\H,r,\alpha}(g) &=\rho_{\,\H, r}(f)-\frac{1}{\log\frac{1}{1-r^2}}\left(\int_{\A((1-\delta)r,r)}\lvert f(z)\rvert^2(1-\lvert z\rvert^2)dA(z)-2\int_{\A((1-\delta)r,r)}\lvert f(z)\rvert dA(z)\right) \\
& = \rho_{\,\H, r}(f)-\frac{1}{\log\frac{1}{1-r^2}}\int_{\A((1-\delta)r,r)}\lvert f(z)\rvert^2(1-\lvert z\rvert^2)dA(z)+o(1),
\end{align*}
where the last equality follows from Lemma~\ref{lem:L1-non-concentration}.
Taking the lower limit in \eqref{eqn:L2-est-1} as $r\to 1^-$ along $\mathcal{R}$, we find that
$$
\rH\leq \liminf_{r\to 1^-}\rho_{\,\H, (1-\delta)r}(f)=\rH-c_0.
$$
Since $c_0\geq 0$, clearily it follows that $c_0=0$.
\end{proof}
\begin{prop}\label{prop:u-est}
Let $f$ be a minimizer of $\rHr(f)$ and let $\delta=1-r$. 
Let $u$ be the solution to $\bar\partial u=\bar\partial(\chi f)$ from Theorem~\ref{thm:d-bar}. Then 
$$
\frac{1}{\log\frac{1}{1-r^2}}\int_{\D}\lvert u_r\rvert^2(1-\lvert z\rvert^2)dA(z)=o(1)
$$
as $r\to 1^{-}$ along a sequence $\mathcal{R}$ along which $\inf_f\rHr(f)\to\rH$.
\end{prop}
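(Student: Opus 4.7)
The plan is to combine the $\bar\partial$-estimate of Theorem~\ref{thm:d-bar} with the hyperbolic $L^2$-non-concentration estimate (Theorem~\ref{thm:L2-non-conentration}). The specific choice $\delta = 1-r$ is made precisely so that the cut-off gradient blow-up and the extra factor $(1-|z|^2)^2$ appearing in the Bergman weight on the right-hand side of Theorem~\ref{thm:d-bar} compensate one another on the annulus where $\bar\partial\chi$ is supported.

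First I would write out what Theorem~\ref{thm:d-bar} gives for this $u$:
$$
\int_{\D}\lvert u(z)\rvert^2(1-\lvert z\rvert^2)dA(z)\leq \int_{\A((1-\delta)r,r)}\lvert \bar\partial \chi(z)\rvert^2\lvert f(z)\rvert^2(1-\lvert z\rvert^2)^3dA(z).
$$
With $\delta = 1-r$, so that $(1-\delta)r = r^2$, the cut-off estimate of Subsection~\ref{ss:cut-off} gives
$$
\lvert \bar\partial\chi(z)\rvert^2 \leq \frac{C}{(1-r)^2 r^2}\,\mathbf{1}_{\A(r^2,r)}(z).
$$
On that same annulus we have $1-\lvert z\rvert^2 \leq 1-r^4 = (1-r)(1+r)(1+r^2)\leq 4(1-r)$, and therefore
$$
\lvert\bar\partial \chi(z)\rvert^2 (1-\lvert z\rvert^2)^2 \leq \frac{C}{(1-r)^2 r^2}\cdot 16(1-r)^2 = \frac{16C}{r^2},
$$
which is uniformly bounded as $r\to 1^-$.

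Plugging this back into the $\bar\partial$-estimate, the right-hand side is dominated by
$$
C'\int_{\A(r^2,r)}\lvert f(z)\rvert^2(1-\lvert z\rvert^2)dA(z).
$$
Dividing through by $\log\frac{1}{1-r^2}$, the desired conclusion reduces immediately to the hyperbolic $L^2$-non-concentration estimate of Theorem~\ref{thm:L2-non-conentration} along the sequence $\mathcal{R}$, since $r(1-\delta) = r^2$ makes the integration annulus agree with the one appearing in that theorem.

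The main content of the proposition has therefore already been done in proving Theorems~\ref{thm:d-bar} and~\ref{thm:L2-non-conentration}; the present argument is essentially their combination. The only nontrivial point to verify is the compatibility of scales between the $\delta^{-2}$ blow-up of $\lvert\bar\partial\chi\rvert^2$ and the extra factor $(1-\lvert z\rvert^2)^2$ available in the $\bar\partial$-estimate, and this is precisely what forces the natural choice $\delta = 1-r$: a significantly smaller $\delta$ would fail to absorb the singularity of $\lvert\bar\partial\chi\rvert$, while a significantly larger one would shrink the inner radius too quickly and make the non-concentration step unavailable.
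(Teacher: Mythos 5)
Your proposal is correct and follows essentially the same route as the paper: both bound $\lvert\bar\partial\chi\rvert^2(1-\lvert z\rvert^2)^2$ uniformly on the annulus $\A(r^2,r)$ using the cut-off estimate and the choice $\delta=1-r$, and then conclude by invoking the hyperbolic $L^2$-non-concentration estimate of Theorem~\ref{thm:L2-non-conentration} along the sequence $\mathcal{R}$. The computation $1-\lvert z\rvert^2\leq 1-r^4\leq 4(1-r)$ on the annulus is a correct instance of the paper's bound $O\bigl(((1-r+r\delta)/\delta)^2\bigr)=O(1)$, so nothing is missing.
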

\begin{proof}
From Theorem~\ref{thm:d-bar} we have the estimate
$$
\frac{1}{\log\frac{1}{1-r^2}}\int_{\D}\lvert u(z)\rvert^2(1-\lvert z\rvert^2)dA(z)\leq \frac{1}{\log\frac{1}{1-r^2}}\int_{\A((1-\delta)r,r)}\lvert \bar\partial \chi\rvert^2\lvert f(z)\rvert^2(1-\lvert z\rvert^2)^3dA(z).
$$
We estimate this by
$$
\frac{1}{\log\frac{1}{1-r^2}}\int_{\A((1-\delta)r,r)}\lvert \bar\partial \chi\rvert^2\lvert f(z)\rvert^2(1-\lvert z\rvert^2)^3dA(z)\leq \frac{\left\lVert\phantom{\huge\vert} \bar\partial\chi \, (1-\lvert z\rvert^2)\;\right\rVert_\infty^2}{\log\frac{1}{1-r^2}}\int_{\A((1-\delta)r,r)}\lvert f(z)\rvert^2(1-\lvert z\rvert^2)dA(z).
$$
The supremum norm may be estimated by
$$
\lvert \bar\partial\chi(z)\rvert^2 (1-\lvert z\rvert^2)^2\leq C\frac{(1-r^2(1-\delta)^2)^2}{\delta^2}=O\left(\left(\frac{1-r+r\delta}{\delta}\right)^2\right).
$$
Since $\delta=1-r$, the latter expression is $O(1)$. 
Invoking Theorem~\ref{thm:L2-non-conentration} and using again that $\delta=1-r$ completes the proof.
\end{proof}
\begin{rem}
In order to control $\lVert \bar\partial \chi\, (1-\lvert z\rvert^2)\rVert_\infty$, the parameter $\delta$ needs to be controlled from below, to avoid $\chi$ dropping off too steeply.
On the other hand, in order to apply Theorem~\ref{thm:L2-non-conentration} we need instead an upper bound on the same quantity. The choice $\delta=1-r$ balances these very well (but is probably not sharp). 
\end{rem}
\subsection{Proof of the Main Theorem}
Let $r\to 1^-$, along a subsequence $\mathcal{R}=\{r_k\}$ such that there are admissible $f_k$ for which $\rho_{\,\H, r_k}(f_k)\to\rH$. Let $\delta=(1-r)$, ensuring that all estimates from the previous results come into play. 
Let $f$ be minimizers of $\rho_{\H, r}(f)$, and let $u$ be the $L^2_{n(r),\phi}$-minimal solutions to
$\bar\partial u=\bar\partial(\chi f)$. Put $\nu=\chi  f-u$. 
Then by Liouville's Theorem, $\nu$ is a polynomial of degree at most $n= \lceil r^2/(1-r^2)\rceil$.
As such, it is admissible for $\rHr^*$. Calculating this functional, we obtain
$$
\rho^*_{\H,r}(\nu)=\rHr(\nu)+\frac{1}{\log\frac{1}{1-r^2}}\int_{\D\setminus\D(0,r)}\lvert u(z)\rvert^2(1-\lvert z\rvert^2)dA(z)=:\rHr(\nu)+I_{\text{ext}}.
$$
By Proposition~\ref{prop:u-est}, the term $I_{\text{ext}}$ is $o(1)$, 
so disappears as we take the lower limit. 

We focus on the term $\rHr(\nu)$.
Expanding the square, we find that
$$
\rHr(\nu)=\frac{1}{\log\frac{1}{1-r^2}}\int_{\D(0,r)}\lvert \nu\rvert^2(1-\lvert z\rvert^2)dA-\frac{2}{\log\frac{1}{1-r^2}}\int_{\D(0,r)}\lvert \nu\rvert dA+1=:I_{L^2}-2I_{L^1}+1.
$$
Turning first to $I_{L^2}$, we see that
$$
I_{L^2}=\frac{1}{\log\frac{1}{1-r^2}}\int_{\D(0,r)}\left(\lvert \chi f\rvert^2-2\Re[\chi f \overline{u}]+\lvert u\rvert^2\right)(1-\lvert z\rvert^2)dA.
$$
We estimate the three terms separately: the main contribution comes from the first term;
$$
\frac{1}{\log\frac{1}{1-r^2}}\left\lvert \int_{\D(0,r)}(\lvert \chi_\delta f\rvert^2(1-\lvert z\rvert^2)dA-\int_{\D(0,r)}\lvert f\rvert^2(1-\lvert z\rvert^2)dA\right\rvert=o(1)
$$
by the $L^2$-non-concentration estimate. The middle term is handled as follows: $\Re[\chi f\overline{u}]\leq \lvert \chi f\rvert \,\lvert u\rvert$, and
\begin{align*}
\frac{1}{\log\frac{1}{1-r^2}}\int_{\D(0,r)}\lvert \chi f\rvert \, \lvert u\rvert(1-\lvert z\rvert^2)dA(z) &\leq \left(\frac{1}{\log\frac{1}{1-r^2}}\int_{\D(0,r)}\lvert f\rvert^2(1-\lvert z\rvert^2)dA\right)^{1/2} \\
&\times \left(\frac{1}{\log\frac{1}{1-r^2}}\int_{\D(0,r)}\lvert u\rvert^2(1-\lvert z\rvert^2)dA\right)^{1/2}.
\end{align*}
Since the $L^2$-norm of $f$ is bounded independently of $r$, it follows by applying Proposition~\ref{prop:u-est} to the second factor that the expression is $o(1)$.
The third term is also $o(1)$, in light of the $\bar\partial$-estimate Proposition~\ref{prop:u-est}. In summary: 
$$
I_{L^2}=\int_{\D(0,r)}\lvert f\rvert^2(1-\lvert z\rvert^2)dA+o(1).
$$
Next, turning to $I_{L^1}$ we find that
$$
\left\lvert I_{L^1}-\frac{1}{\log\frac{1}{1-r^2}}\int_{\D(0,r)}\lvert f\rvert dA\right\rvert\leq \frac{1}{\log\frac{1}{1-r^2}}\left(2\int_{\A(r(1-\delta), r)}\lvert f\rvert dA+\int_{\D(0,r)}\lvert u\rvert dA\right).
$$
The first term on the right is $o(1)$ by Proposition~\ref{prop:u-est}, 
and, using Cauchy--Schwarz inequality and Proposition~\ref{prop:u-est} we find that the second term also vanishes in the limit. It follows that
$$
I_{L^1}=\frac{1}{\log\frac{1}{1-r^2}}\int_{\D(0,r)}\lvert f\rvert dA+o(1).
$$
Thus, considering the sequence $\mathcal{R}=\{r_k\}$ along which $\rho_{\,\H,r}(f)$ tends to $\rH$, we may write
\begin{align*}
\rho_{\,\H,r_k}^*(\nu)&=I_{L^2}-2I_{L^1}+1+I_{\text{ext}}=\frac{1}{\log\frac{1}{1-r^2}}\int_{\D(0,r)}\lvert f\rvert^2(1-\lvert z\rvert^2)dA-\frac{2}{\log\frac{1}{1-r^2}}\int_{\D(0,r)}\lvert f\rvert + 1 + o(1)\\ 
&=\rho_{\,\H,r}(f)+o(1).
\end{align*}
It thus follows that
$$
\rH^*\leq\lim_{k\to\infty}\rho_{\,\H,r_k}^*(\nu_{r_k})=\rH.
$$
Put together with the trivial inequality $\rH^*\geq\rH$, this concludes the proof. 
\qed

\begin{proof}[Proof of Corollaries~\ref{cor:degree-hyperbolic} and \ref{cor:degree-planar}]
We begin with Corollary~\ref{cor:degree-hyperbolic}.
Let $\mathcal{R}$ be the subsequence of the previous proof. The statement regarding $\rH^*$ follows immediately from the fact that the polynomials $\nu_r$ used in the proof of Theorem~\ref{thm:main-hyperbolic} are elements of $\Pol_{n(r)}(\C)$. 
However, since $\rHr(\nu)=\rHr^*(\nu)+o(1)$ as $r\to 1^-$ with $r\in\mathcal{R}$, the result follows for $\rH$ as well.

The analogous planar result, Corollary~\ref{cor:degree-planar}, follows in exactly the same fashion.
\end{proof}
\bibliography{References}{}
\bibliographystyle{amsplain}

\end{document}